\newtheorem{theorem}{Theorem}[section]
\newtheorem{lemma}[theorem]{Lemma}
\newtheorem{example}[theorem]{Example}
\newtheorem{proposition}[theorem]{Proposition}
\newtheorem{remark}[theorem]{Remark}
\newtheorem{definition}[theorem]{Definition}
\numberwithin{equation}{section}
\begin{document}
\begin{frontmatter}
\title{Global hypoellipticity for strongly invariant operators}

%----------Author 1

\author{Alexandre Kirilov}
\address{
	Universidade Federal do Paran\'{a}, 
	Departamento de Matem\'{a}tica, \\
	C.P.19096, CEP 81531-990, Curitiba, Brazil
}
\ead{akirilov@ufpr.br}

%----------Author 2

\author{Wagner A. A. de Moraes}
\address{
	Universidade Federal do Paran\'{a},
	Programa de P\'os-Gradua\c c\~ao em Matem\'{a}tica,\\
	C.P.19096, CEP 81531-990, Curitiba, Brazil
}
\ead{wagneramat@gmail.com}

%---------Author 3

%----------classification, keywords, date
%\thanks{This study was financed in part by the Coordenação de Aperfeiçoamento de Pessoal de Nível Superior - Brasil (CAPES) - Finance Code 001.}

\date{\today}

\begin{abstract}
In this note, by analyzing the behavior at infinity of the matrix symbol of an invariant operator $P$ with respect to a fixed elliptic operator, we obtain a necessary and sufficient condition to guarantee that $P$ is globally hypoelliptic. As an application, we obtain the characterization of global hypoellipticity on compact Lie groups and examples on the sphere and the torus. We also investigate relations between the global hypoellipticity of $P$ and global subelliptic estimates. 
\end{abstract}
\begin{keyword}
	Global hypoellipticity \sep Invariant operators \sep Fourier series \sep Subelliptic estimates \sep Compact Lie groups.
	\MSC[2010]{Primary 58J40, 35H10; Secondary 35B10, 35P15}
\end{keyword}
\end{frontmatter}

% -------------------- for long articles
%\tableofcontents 
% --------------------

%========================================================================================================
\section{Introduction}
%========================================================================================================

This note aims to study the global hypoellipticity of strongly invariant operators defined on a closed smooth manifold $M$. More precisely, consider a linear continuous operator  $P:{\mathscr{D}'}(M) \rightarrow {\mathscr{D}'}(M)$ that commutes with an elliptic operator $E$ defined on $M$ and assume that the domain of the adjoint operator $P^*$ contains $C^\infty(M)$.

The assumption of commutativity introduces on $M$ a Fourier analysis relative to the elliptical operator $E$ and the assumption on the domain of the adjoint operator ensures that the Fourier coefficients of $Pu$ are the product of its matrix symbol $\sigma_P$ by the Fourier coefficient of $u\in C^\infty(M)$. For more details, see Section 4 of \cite{DR-Fmul}.

We recall that an operator $L$ is globally hypoelliptic on $M$ if the conditions $u \in {\mathscr{D}'}(M)$ and $L u \in C^{\infty}(M)$ imply $u \in C^{\infty}(M)$. This global property has been widely studied on the torus, see  \cite{BERG94,BCM93,BDG17,BDGK15,CC00,GPY1,GPY2,GW72-Proc,GW73-TAMS,HP00,HP02,Hou79,HOU82,Petr06,Petr11}, and on compact Lie groups, see \cite{RTW14,DR14,RW15}.

The first study on the global hypoellipticity of differential operators that commute with an elliptic operator on a closed manifold was presented by S. Greenfield and N. Wallach in 1973, see \cite{GW73-TAMS}. More recently, in \cite{DR-Fmul}, J. Delgado and M. Ruzhansky have developed a theory on strongly invariant operators by obtaining a precise characterization of the necessary and sufficient conditions to construct a consistent Fourier analysis with respect to an elliptic operator on a closed manifold. Using this characterization, in \cite{AK19,AGK18} was studied the global hypoellipticity in a class of strongly invariant operators with separation of variables in a specific Cartesian product of compact manifolds. 

In this note, we use the characterization obtained by Delgado and Ruzhansky to characterize the global hypoellipticity and to extend the results obtained by Greenfield and Wallach to the context of strongly invariant operators defined on a closed manifold.

First, in Section \ref{sect-notation}, we introduce the notation and the results necessary for the development of this note. Next, in Section \ref{sect-GW}, we present a version, for strongly invariant operators, of Greenfield's and Wallach's classical theorem, which relates global hypoellipticity of an operator to the behavior of its symbol at infinity. As an application, in Section \ref{Liegroup_section}, we introduce the notation necessary to translate our main result into the context of Lie groups, and we present concrete examples of globally hypoelliptic operators on the sphere $\mathbb{S}^3$ and the torus $\mathbb{T}^2$. Finally, in Section \ref{sect-subelliptic}, we study some of the connections between global hypoellipticity and the validity of global subelliptic estimates.

%========================================================================================================
\section{Fourier analysis associated to an elliptic operator}\label{sect-notation}
%========================================================================================================

Let $\mathbb{N}_0=\mathbb{N}\cup\{0\}$, $\langle \cdot, \cdot \rangle$ be the usual inner product of $\mathbb{C}^d$ and $M$ be a $d$--dimensional closed smooth manifold endowed with a positive measure $dx$. Consider the space $L^2(M)$ of square integrable complex-valued functions on $M$ with respect to $dx$ and denote by $H^s(M)$ the standard Sobolev space of order $s$ on $M$, thus
\begin{equation*}\label{sobolev-dx}
C^\infty(M) = \bigcap_{s \in \mathbb{R}}H^{s}(M) \mbox{ \ and \ } {\mathscr{D}'}(M) = \bigcup_{s \in \mathbb{R}}H^{s}(M).
\end{equation*}

Following the construction proposed by J. Delgado and M. Ruzhansky (see \cite{DR-Fmul}), we introduce a discrete Fourier analysis in $M$ that is associated to an elliptic operator. Let $E=E(x,D_x)$ be a fixed classical positive elliptic pseudo-differential operator of order $\nu\in \mathbb{R}$, then:
 \begin{enumerate}
	\item the eigenvalues of E, counted without multiplicities, form a sequence
	\begin{equation}\label{lambda>0}
	0=\lambda_0 < \lambda_1 < \lambda_2 < \ldots  \longrightarrow \infty;
	\end{equation}
	\item for each $\lambda_j$, the eigenspace $E_{\lambda_j}$ of $E$ has finite dimension $d_j$, $E_{\lambda_j}$ is a subspace of $C^\infty(M)$  and
	\begin{equation}\label{serie}
	\sum_{j=0}^{\infty}d_j(1+\lambda_j)^{-2n} < \infty.
	\end{equation}
	
	\item there is an orthonormal basis $\{e_{j}^{k}\ ; {1 \leq k \leq d_j} \mbox{ and } {j \in \mathbb{N}_0} \}$ for $L^2(M)$ consisting of smooth eigenfunctions of $E$ such that for each $j\in \mathbb{N}_0$, $\{e_{j}^{1},e_{j}^{2},\ldots, e_{j}^{d_j}\}$ is an orthonormal basis of $E_{\lambda_j}$ and  $$L^2(M) = \bigoplus_{j\in \mathbb{N}_0} \, E_{\lambda_j};$$
	
	\item the Fourier coefficients of $f \in L^2(M)$, with respect to this orthonormal basis, are given by
	$$ \widehat{f}(j,k)\doteq \int_{M}{f(x)\overline{e_{j}^{k}(x)}dx}, \ 1\leq k \leq d_j, \  j \in \mathbb{N}_0.$$
	We also write $\widehat{f}(j) = \big( \widehat{f}(j,1), \ldots, \widehat{f}(j,d_j)\big), \ j \in \mathbb{N}_0$;
	
	\medskip
	\item if $u \in {\mathscr{D}'}(M)$, then $\widehat{u}(j,k) \doteq u (\overline{e_j^k})$ and
	\begin{equation*}\label{Fourier1}
	u = \sum_{j \in \mathbb{N}_0} \sum_{k=1}^{d_j} \widehat{u}(j,k) e_j^k(x) = \sum_{j\in \mathbb{N}_0}\left\langle  \widehat{u}(j), \overline{e_j}(x) \right\rangle,
	\end{equation*}
	where $\widehat{u}(j) = \big( \widehat{u}(j,1), \ldots, \widehat{u}(j,d_j)\big)$ and  $e_j(x) = \big(e_j^1(x),\ldots,e_j^{d_j}(x) \big)$;
	
	\medskip
	\item smooth functions on $M$ are characterized by
	{\small \begin{equation}\label{f-smooth}
	f \in C^{\infty}(M) \Leftrightarrow \forall N \in \mathbb{N}, \exists C_N>0, \forall \ell \in \mathbb{N}, \ \|\widehat{f}(\ell)\| \leq C_N(1+\lambda_\ell)^{-N};
	\end{equation}}
	and, by duality, distributions are characterized by
	\begin{equation}\label{u-distrib}
	u \in {\mathscr{D}'}(M) \Leftrightarrow \exists N \in \mathbb{N}, \exists C>0, \forall \ell \in \mathbb{N}, \ \|\widehat{u}(\ell)\| \leq Cf(1+\lambda_\ell)^N.
	\end{equation}
	
	\item for a distribution $u \in {\mathscr{D}'}(M)$ we have
	$$u \in H^s(M) \Leftrightarrow  \sum\limits_{j=0}^{\infty}\sum\limits_{k=1}^{d_j}(1+\lambda_j)^{2s/\nu}|\widehat{u}(j,k)|^2 < \infty.$$
\end{enumerate}

The next results and definitions are a consequence of the results and remarks in Section 4 of \cite{DR-Fmul}.

\begin{proposition}\label{DR_str_inv} 
	Let $P: C^{\infty}(M) \rightarrow C^{\infty}(M)$ be a linear operator. If the domain of $P^*$ contains $C^{\infty}(M)$, then the following conditions are equivalent: 
	\begin{enumerate}[(i)]
		\item For each $j\in\mathbb{N}_0$, we have $P(E_{\lambda_j})\subset E_{\lambda_j}$. 
		\item For each $j\in\mathbb{N}_{0}$ and $1\leq k\leq j$, we have
		$PE e_{j}^{k}=EP e_{j}^{k}.$
		\item For each $\ell\in\mathbb{N}_0$ there exists a matrix $\sigma(\ell)\in\mathbb{C}^{d_{\ell}\times d_{\ell}}$ such that for all $e_j^k$ 
		\begin{equation}\label{invadef}
		\widehat{Pe_j^k}(\ell,m)=\sigma(\ell)_{mk}\delta_{j\ell}.
		\end{equation}
		\item For each $\ell\in\mathbb{N}_0 $ there exists a matrix $\sigma(\ell)\in\mathbb{C}^{d_{\ell}\times d_{\ell}}$ such that
		\begin{equation}\label{invadef2}
		\widehat{Pf}(\ell)=\sigma(\ell)\widehat{f}(\ell), \ f\in C^{\infty}(M).
		\end{equation}
	\end{enumerate}
	
	The matrices $\sigma(\ell)$ in \eqref{invadef} and in \eqref{invadef2} coincide.	Moreover, if $P$ extends to a linear continuous operator $P: {\mathscr{D}'}(M) \rightarrow {\mathscr{D}'}(M)$, then the above properties are also equivalent to: %the following ones: \vspace{-3mm}
	\begin{enumerate}[(i)]
		\setcounter{enumi}{4}
		%\item For each $j \in \mathbb{N}_0$, we have $PP_j = P_jP$ on $ C^{\infty}(M)$.
		\item $PE=EP$ on $L^2(M)$.
	\end{enumerate}
\end{proposition}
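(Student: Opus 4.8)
The plan is to run the cycle of implications $(i)\Rightarrow(iii)\Rightarrow(iv)\Rightarrow(i)$, to treat $(i)\Leftrightarrow(ii)$ by a separate short argument, to keep track of the symbol matrices as they are produced so that their coincidence is built into the proof, and finally, under the additional assumption that $P$ extends continuously to $\mathscr{D}'(M)$, to insert $(v)$ into the chain. Throughout I use only elementary properties of the eigenbasis $\{e_j^k\}$ from Section~\ref{sect-notation}: that it is orthonormal in $L^2(M)$, that $\{e_j^1,\dots,e_j^{d_j}\}$ spans $E_{\lambda_j}$, that $L^2(M)=\bigoplus_{j}E_{\lambda_j}$, and that $E e_j^k=\lambda_j e_j^k$.

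For $(i)\Rightarrow(iii)$ the argument is finite-dimensional linear algebra: since $P$ maps $C^\infty(M)$ into itself and $E_{\lambda_j}\subset C^\infty(M)$, the hypothesis $P(E_{\lambda_j})\subset E_{\lambda_j}$ lets us write $Pe_j^k=\sum_{m=1}^{d_j}\sigma(j)_{mk}\,e_j^m$ for a uniquely determined $\sigma(j)\in\mathbb{C}^{d_j\times d_j}$, and pairing with the orthonormal basis gives exactly \eqref{invadef}. For $(iv)\Rightarrow(i)$: if $f\in E_{\lambda_j}$ then $\widehat f(\ell)=0$ for $\ell\neq j$, so by \eqref{invadef2} $\widehat{Pf}(\ell)=\sigma(\ell)\widehat f(\ell)=0$ for $\ell\neq j$; since $Pf\in C^\infty(M)\subset L^2(M)=\bigoplus_m E_{\lambda_m}$, vanishing of all its components outside level $j$ forces $Pf\in E_{\lambda_j}$. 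The equivalence $(i)\Leftrightarrow(ii)$ is just as quick: if $P(E_{\lambda_j})\subset E_{\lambda_j}$ then $PEe_j^k=\lambda_j Pe_j^k=E(Pe_j^k)$ because $Pe_j^k\in E_{\lambda_j}$; conversely, $EPe_j^k=PEe_j^k=\lambda_j Pe_j^k$ says $Pe_j^k\in\ker(E-\lambda_j I)=E_{\lambda_j}$, and since the $e_j^k$ span $E_{\lambda_j}$ we recover $(i)$.

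The one genuinely delicate step is $(iii)\Rightarrow(iv)$, and it is the only place where the hypothesis $C^\infty(M)\subset\operatorname{dom}(P^*)$ is needed. Because $P$ is not assumed continuous on $C^\infty(M)$, we may not apply it term by term to the expansion $f=\sum_{j,k}\widehat f(j,k)\,e_j^k$; instead, fix $\ell,m$ and route everything through the adjoint: $\widehat{Pf}(\ell,m)=\langle Pf,e_\ell^m\rangle=\langle f,P^*e_\ell^m\rangle$, where $P^*e_\ell^m\in L^2(M)$ by hypothesis. Expanding $f$ in $L^2(M)$ and using continuity of the inner product gives $\widehat{Pf}(\ell,m)=\sum_{j,k}\widehat f(j,k)\langle e_j^k,P^*e_\ell^m\rangle=\sum_{j,k}\widehat f(j,k)\,\widehat{Pe_j^k}(\ell,m)$, and substituting \eqref{invadef} collapses this to $\sum_{k=1}^{d_\ell}\sigma(\ell)_{mk}\widehat f(\ell,k)=\big(\sigma(\ell)\widehat f(\ell)\big)_m$, which is \eqref{invadef2} with the very same matrices. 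Conversely, plugging $f=e_j^k$ into \eqref{invadef2} and using $\widehat{e_j^k}(\ell,m)=\delta_{j\ell}\delta_{km}$ returns \eqref{invadef}; hence the matrices in \eqref{invadef} and \eqref{invadef2} coincide.

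Finally, assume $P$ extends to a continuous operator on $\mathscr{D}'(M)$, restricting to the given $P$ on $C^\infty(M)$. Then $(v)\Rightarrow(ii)$ is immediate, being the identity $PE=EP$ evaluated on the basis functions $e_j^k$. For $(ii)\Rightarrow(v)$, use the already-proven $(iv)$: since $E$ is positive (hence self-adjoint) with eigenfunctions $e_\ell^m$, one has $\widehat{Ef}(\ell)=\lambda_\ell\widehat f(\ell)$, so $\widehat{PEf}(\ell)=\sigma(\ell)\lambda_\ell\widehat f(\ell)=\lambda_\ell\sigma(\ell)\widehat f(\ell)=\widehat{EPf}(\ell)$ for every $f\in C^\infty(M)$; thus $PEf=EPf$ on the dense subspace $C^\infty(M)$, and the identity propagates to $L^2(M)$ by continuity of $P$ on $\mathscr{D}'(M)$ and of $E$ between Sobolev spaces. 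The main obstacle, as noted, is justifying the interchange of $P$ with the infinite Fourier series in $(iii)\Rightarrow(iv)$; everything else is bookkeeping with the eigenbasis.
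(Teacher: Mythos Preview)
The paper does not actually prove this proposition: it is stated immediately after the sentence ``The next results and definitions are a consequence of the results and remarks in Section~4 of \cite{DR-Fmul}'' and is simply imported from Delgado--Ruzhansky without argument. So there is no in-paper proof to compare against.

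That said, your argument is correct and is essentially the standard one (and, as far as the cycle $(i)\Leftrightarrow(ii)$ and $(i)\Rightarrow(iii)\Rightarrow(iv)\Rightarrow(i)$ goes, it is the argument one finds in \cite{DR-Fmul}). You have also correctly identified the only nontrivial point: in $(iii)\Rightarrow(iv)$ one cannot push $P$ through the Fourier expansion of $f$ directly, and the hypothesis $C^\infty(M)\subset\operatorname{dom}(P^*)$ is exactly what lets you move the computation to the $L^2$ side via $\langle Pf,e_\ell^m\rangle=\langle f,P^*e_\ell^m\rangle$ and then use $L^2$-convergence of the eigenfunction expansion. Your treatment of $(ii)\Rightarrow(v)$ by density is fine once you note that both $PE$ and $EP$ are continuous as maps $L^2(M)\to\mathscr{D}'(M)$ (each being a composition of a Sobolev-continuous operator with a $\mathscr{D}'$-continuous one), so agreement on the dense subspace $C^\infty(M)$ suffices.
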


\begin{definition}
If any of the equivalent conditions $(i)-(iv)$ are satisfied, we say that the operator $P$ is invariant with respect to $E$ (or simply $E$-invariant) and its matrix symbol is the sequence $\sigma_P$ of matrices given by properties $(iii)$ and $(iv)$. 

If $P$ extends to a linear continuous operator $P: {\mathscr{D}'}(M) \rightarrow {\mathscr{D}'}(M)$ and satisfies any of the equivalent conditions $(i)-(v)$, we say that $P$ is strongly invariant with respect to $E$. 
\end{definition}

Any $E$-invariant operator $P$ can be written in the following way:
\begin{equation}\label{45}
Pf(x) =\sum_{\ell=0}^{\infty} \sum_{m=1}^{d_\ell}{(\sigma_P(\ell)\widehat{f}(\ell))_me_\ell^m(x)} = \sum_{\ell=0}^{\infty}\left[\sigma_P(\ell)\widehat{f}(\ell)\right]e_\ell(x),
\end{equation}
In particular,
\begin{equation}
\label{46}
Pe_j^k(x)=\sum_{m=1}^{d_j}{\sigma_P(j)_{mk}e_j^m(x)}.
\end{equation}

\begin{proposition}\label{sobolev-cont}
	Let $P$ be an $E$-invariant operator with symbol $\sigma_{P}$ satisfying the following property: there exist $C>0$  and $m\in\mathbb R$ such that 
	$$
	\|\sigma_P(\ell)\|_{\mathscr{L}(E_{\lambda_\ell})} \leq C(1+\lambda_\ell)^{m/\nu}, \ \ell\in\mathbb{N}_{0},
	$$
	where $\|\sigma_P(\ell)\|_{\mathscr{L}(E_{\lambda_\ell})}$ denotes the operator norm in $E_{\lambda_\ell}$.
	Then, $P$ extends to a bounded operator
	from $H^{s}(M)$ to $H^{s-m}(M)$, for every $s\in\mathbb R$.
\end{proposition}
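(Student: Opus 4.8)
The plan is to reduce the statement to the Fourier-series characterization of the Sobolev spaces $H^{s}(M)$ recorded above, together with the multiplier identity \eqref{invadef2}. Up to equivalence of norms we may take
\[
\|u\|_{H^{s}(M)}^{2}=\sum_{j=0}^{\infty}(1+\lambda_{j})^{2s/\nu}\,\|\widehat{u}(j)\|^{2},
\]
where $\|\widehat{u}(j)\|$ denotes the Euclidean norm of the vector $\widehat{u}(j)\in\mathbb{C}^{d_{j}}$; this is merely a regrouping of the double sum in that characterization, since $\|\widehat{u}(j)\|^{2}=\sum_{k=1}^{d_{j}}|\widehat{u}(j,k)|^{2}$.

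The first step I would carry out is the elementary but essential observation that, because $\{e_{\ell}^{1},\dots,e_{\ell}^{d_{\ell}}\}$ is an orthonormal basis of $E_{\lambda_{\ell}}$, the coordinate map $E_{\lambda_{\ell}}\ni g\mapsto\widehat{g}(\ell)\in\mathbb{C}^{d_{\ell}}$ is a unitary isomorphism. Hence the operator norm of the matrix $\sigma_{P}(\ell)$, acting on $\mathbb{C}^{d_{\ell}}$ equipped with the Euclidean norm, equals $\|\sigma_{P}(\ell)\|_{\mathscr{L}(E_{\lambda_{\ell}})}$, and the hypothesis becomes the pointwise bound $\|\sigma_{P}(\ell)v\|\le C(1+\lambda_{\ell})^{m/\nu}\|v\|$ for every $v\in\mathbb{C}^{d_{\ell}}$ and every $\ell\in\mathbb{N}_{0}$.

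Next, for $f\in C^{\infty}(M)$ I would invoke \eqref{invadef2}, namely $\widehat{Pf}(\ell)=\sigma_{P}(\ell)\widehat{f}(\ell)$, and compute
\[
\|Pf\|_{H^{s-m}(M)}^{2}=\sum_{\ell=0}^{\infty}(1+\lambda_{\ell})^{2(s-m)/\nu}\|\sigma_{P}(\ell)\widehat{f}(\ell)\|^{2}\le C^{2}\sum_{\ell=0}^{\infty}(1+\lambda_{\ell})^{2s/\nu}\|\widehat{f}(\ell)\|^{2}=C^{2}\|f\|_{H^{s}(M)}^{2},
\]
the inequality following from the pointwise bound of the previous step applied with $v=\widehat{f}(\ell)$, after which the powers of $(1+\lambda_{\ell})$ combine exactly. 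Thus $\|Pf\|_{H^{s-m}(M)}\le C\|f\|_{H^{s}(M)}$ for all $f\in C^{\infty}(M)$, and since $C^{\infty}(M)$ is dense in $H^{s}(M)$ for every $s\in\mathbb{R}$, $P$ extends uniquely to a bounded operator $H^{s}(M)\to H^{s-m}(M)$; the extensions obtained for different values of $s$ agree on $C^{\infty}(M)$, hence on common domains, so they are mutually consistent.

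There is no genuinely hard step in this argument: the only point that deserves care is the identification of $\|\sigma_{P}(\ell)\|_{\mathscr{L}(E_{\lambda_{\ell}})}$ with the Euclidean operator norm of $\sigma_{P}(\ell)$ acting on Fourier coefficients, since it is exactly this identification that puts the hypothesis into the form needed for the Plancherel-type computation above; everything else is bookkeeping with the exponents $2s/\nu$ together with a routine density and uniqueness argument.
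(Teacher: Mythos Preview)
Your argument is correct and is the standard Plancherel-type proof one would expect. Note, however, that the paper does not actually give a proof of this proposition: it is listed among the ``consequences of the results and remarks in Section~4 of \cite{DR-Fmul}'' and is stated without proof, so there is nothing in the paper to compare your argument against beyond that external reference.
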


Let us denote by $\Sigma$ the class of all matrix symbols, that is,
$$\Sigma \doteq \{ \sigma: \mathbb{N}_0 \ni \ell \mapsto \sigma(\ell) \in \mathbb{C}^{d_\ell \times d_\ell}\}.$$

\begin{definition}\label{moderate-growth}
	We say that a symbol $\sigma \in \Sigma$ has moderate growth if there are $N \in \mathbb{N}$ and $C>0$ such that 
	\begin{equation}\label{moderate-symbol}
	\|\sigma(\ell)\|_{\mathscr{L}(E_{\lambda_\ell})} \leq C(1+\lambda_\ell)^{N/\nu}, \ \ell\in \mathbb{N}_0.
	\end{equation} 
	If $\sigma \in \Sigma$ has moderate growth, the order of $\sigma$ is defined by
	$$ \mbox{ord}(\sigma) \doteq  \inf \{ N \in \mathbb{R}; \ \eqref{moderate-symbol} \mbox{ holds} \}.$$ 
	When the symbol of an $E$-invariant operator $P$ has moderate growth, we define the order of $P$ as being the order of its symbol $\sigma_P$.
\end{definition}

In the remainder of this note, we fix on $M$ a classical positive elliptic pseudo-differential operator $E=E(x,D_x)$ of order $\nu\in \mathbb{R}$. Moreover, whenever we refer to an invariant (or strongly invariant) operator, it shall mean that such invariance occurs with respect to the operator $E$.

%========================================================================================
\section{Global hypoellipticity for strongly invariant operators}\label{sect-GW}
%========================================================================================

Let $P: C^{\infty}(M) \longrightarrow C^{\infty}(M)$ be a strongly invariant operator. By \eqref{invadef2},   for each $\ell\in\mathbb{N}_0$ there exists a matrix $\sigma_P(\ell)\in\mathbb{C}^{d_{\ell}\times d_{\ell}}$ such that
\begin{equation}\label{rel_smth}
\widehat{Pf}(\ell)=\sigma_P(\ell)\widehat{f}(\ell), \ f \in C^{\infty}(M).
\end{equation} 

We claim that the relation \eqref{rel_smth} remains valid for elements of ${\mathscr{D}'}(M)$. Indeed, if $u\in {\mathscr{D}'}(M)$ and $\{u_r\}_{r\in\mathbb{N}}$ is a sequence in $C^{\infty}(M)$ such that $u_r \rightarrow u$ in ${\mathscr{D}'}(M)$, then $\widehat{u}_r(j,k) \rightarrow \widehat{u}(j,k),$ for any $j\in \mathbb{N}_0$ and $1 \leq k \leq d_j$. 

Since $Pu_r \in C^{\infty}(M)$ and $Pu_r \rightarrow Pu$ in ${\mathscr{D}'}(M)$, then for any $j \in\mathbb{N}_0$ and $1 \leq k \leq d_j$ we have $\widehat{Pu_r}(j,k) \rightarrow \widehat{Pu}(j,k)$. 

However, $\widehat{Pu_r}(j,k)= \left(\sigma_P(j)\widehat{u_r}(j)\right)_k,$ therefore  $\widehat{u_r}(j,k) \rightarrow \widehat{u}(j,k)$ and \\ $\widehat{Pu_r}(j,k) \rightarrow \left(\sigma_P(j)\widehat{u}(j)\right)_k.$ This shows that $\widehat{Pu}(j,k) = \left(\sigma_P(j)\widehat{u}(j)\right)_k$ and thus 
\begin{equation}\label{rel_distr} 
\widehat{Pu}(\ell)=\sigma_P(\ell)\widehat{u}(\ell), \mbox{ for all } u\in {\mathscr{D}'}(M).
\end{equation}

\begin{definition}\label{GH-definition}
	An operator $P:{\mathscr{D}'}(M)\to{\mathscr{D}'}(M)$ is globally hypoelliptic on $M$ if the conditions $u \in {\mathscr{D}'}(M)$ and $P u \in C^{\infty}(M)$ imply that $u \in C^{\infty}(M)$.
\end{definition}

To relate the global hypoellipticity of an operator to the behavior of its symbol at infinity, we introduce the following number.

\begin{definition} Let $\sigma \in \Sigma$ be a symbol. For each $\ell \in \mathbb{N}_0$, we define
	\begin{equation*}
	m(\sigma(\ell))  \doteq \inf \{\|\sigma(\ell)v\|; v \in \mathbb{C}^{d_\ell} \text{ and }   \|v\|=1\},
	\end{equation*}
\end{definition}

\begin{theorem} \label{GH} 
	A strongly invariant operator $P:{\mathscr{D}'}(M) \rightarrow {\mathscr{D}'}(M)$ is globally hypoelliptic if and only if there exist constants  $L$, $m$ and $R$ such that
	\begin{equation} \label{LM}
	m(\sigma_P(j)) \geq L(1+\lambda_j)^{m/\nu}, \text{ whenever }  j \geq R.
	\end{equation}
\end{theorem}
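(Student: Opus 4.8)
The plan is to reduce both implications to statements about Fourier coefficients, using the distributional multiplier identity $\widehat{Pu}(\ell)=\sigma_P(\ell)\widehat u(\ell)$ valid for every $u\in\mathscr{D}'(M)$ (equation \eqref{rel_distr}) together with the coefficient characterizations \eqref{f-smooth} of $C^\infty(M)$ and \eqref{u-distrib} of $\mathscr{D}'(M)$. Note first that in \eqref{LM} the constant $L$ must be taken positive for the condition to carry information, since $m(\sigma_P(j))\ge 0$ always; the implicit reading $L>0$ will be used in the ``if'' direction.

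\emph{Sufficiency.} Assume \eqref{LM} holds with $L>0$. Then $m(\sigma_P(j))>0$ for $j\ge R$, so $\sigma_P(j)$ is injective, hence invertible, and an elementary estimate gives $\|\sigma_P(j)^{-1}\|\le 1/m(\sigma_P(j))$ in operator norm on $\mathbb C^{d_j}$ (if $\|v\|=1$ and $w=\sigma_P(j)^{-1}v$, then $1=\|\sigma_P(j)w\|\ge m(\sigma_P(j))\|w\|$). Now take $u\in\mathscr{D}'(M)$ with $Pu\in C^\infty(M)$. For $j\ge R$, writing $\widehat u(j)=\sigma_P(j)^{-1}\widehat{Pu}(j)$ gives
\[
\|\widehat u(j)\|\ \le\ \frac{1}{m(\sigma_P(j))}\,\|\widehat{Pu}(j)\|\ \le\ \frac1L\,(1+\lambda_j)^{-m/\nu}\,\|\widehat{Pu}(j)\|.
\]
Since $Pu\in C^\infty(M)$, \eqref{f-smooth} provides, for each $N$, a constant $C_N>0$ with $\|\widehat{Pu}(j)\|\le C_N(1+\lambda_j)^{-N}$, whence $\|\widehat u(j)\|\le (C_N/L)(1+\lambda_j)^{-N-m/\nu}$ for $j\ge R$; the finitely many indices $j<R$ are absorbed into the constant. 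As $N$ is arbitrary, \eqref{f-smooth} yields $u\in C^\infty(M)$, so $P$ is globally hypoelliptic.

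\emph{Necessity.} Argue by contraposition: assume \eqref{LM} fails for every choice of $L,m,R$. Applying this failure successively with $L=1$, with $m$ chosen so that $m/\nu=-k$, and with a threshold $R$ larger than the previously selected index, we extract a strictly increasing sequence $(j_k)_{k\in\mathbb N}$ with $m(\sigma_P(j_k))<(1+\lambda_{j_k})^{-k}$, and $\lambda_{j_k}\to\infty$ by \eqref{lambda>0}. By compactness of the unit sphere of $\mathbb C^{d_{j_k}}$ the infimum defining $m(\sigma_P(j_k))$ is attained at some unit vector $v_k$, so $\|\sigma_P(j_k)v_k\|<(1+\lambda_{j_k})^{-k}$. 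Let $u$ be the distribution with $\widehat u(j_k)=v_k$ for all $k$ and $\widehat u(\ell)=0$ otherwise; since $\|\widehat u(\ell)\|\le 1$ for every $\ell$, \eqref{u-distrib} shows $u\in\mathscr{D}'(M)$ (indeed $u\in L^2(M)$), while $\|\widehat u(j_k)\|=1\not\to0$ and \eqref{f-smooth} show $u\notin C^\infty(M)$. On the other hand $\widehat{Pu}(\ell)=\sigma_P(\ell)\widehat u(\ell)$ vanishes for $\ell\notin\{j_k\}$ and satisfies $\|\widehat{Pu}(j_k)\|=\|\sigma_P(j_k)v_k\|<(1+\lambda_{j_k})^{-k}$, so for any $N$ only the finitely many terms with $k<N$ can fail the bound $(1+\lambda_\ell)^{-N}$; enlarging the constant to cover them, \eqref{f-smooth} gives $Pu\in C^\infty(M)$. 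Thus $P$ is not globally hypoelliptic, which is the desired contrapositive.

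I expect the whole argument to be short, with the only delicate points being organizational rather than conceptual: checking that the $u$ built in the necessity part is genuinely a distribution, genuinely not smooth, and has smooth image $Pu$ — all of which drop out once the subsequence $(j_k)$ is chosen with the sufficiently fast decay $m(\sigma_P(j_k))<(1+\lambda_{j_k})^{-k}$ — together with the passage, in the sufficiency part, from a polynomial lower bound on $m(\sigma_P(j))$ to a polynomial upper bound on $\|\sigma_P(j)^{-1}\|$. The linchpin is the distributional multiplier identity \eqref{rel_distr}, established immediately before the statement; with it in hand neither direction needs any input beyond \eqref{f-smooth} and \eqref{u-distrib}.
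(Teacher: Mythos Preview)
Your proof is correct and follows essentially the same route as the paper's own argument: both directions are handled via the distributional multiplier identity \eqref{rel_distr} together with the coefficient characterizations \eqref{f-smooth} and \eqref{u-distrib}, with sufficiency using $\|\sigma_P(j)^{-1}\|\le 1/m(\sigma_P(j))$ and necessity constructing a singular distribution from unit vectors $v_k$ along a subsequence $(j_k)$ with $m(\sigma_P(j_k))<(1+\lambda_{j_k})^{-k}$. The only cosmetic difference is that you invoke compactness of the unit sphere to attain the infimum, whereas the paper simply picks a unit vector satisfying the strict inequality (which needs no compactness); both are fine.
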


\begin{proof} Let $u\in{\mathscr{D}'}(M)$ such that $Pu=f \in C^{\infty}(M)$.  By \eqref{rel_distr} we have 
	$$\widehat{f}(\ell)=\sigma_P(\ell)\widehat{u}(\ell), \ \ell \in \mathbb{N}.$$
	
	By hypothesis, for each $j \geq R$, we have $m(\sigma_P(j)) \neq 0$, that is $\sigma(j)$ is invertible for any $j \geq R$, and we can write 
	$$\widehat{u}(j)=\sigma_P(j)^{-1}\widehat{f}(j).$$
	
	Therefore, if $j \geq R$,
	$$
	\|\widehat{u}(j)\| \leq  \|\sigma_P(j)^{-1}\|\,\|\widehat{f}(j)\| \leq  m(\sigma_P(\ell))^{-1}\|\widehat{f}(j)\|
	\ \leq \ \frac{1}{L}(1+\lambda_j)^{-m/\nu}\|\widehat{f}(j)\|.
	$$
	
	Given $N \in \mathbb{N}$, take $K \in \mathbb{N}$ such that $K > N-m/\nu$. Since $f \in C^{\infty}(M)$, by \eqref{f-smooth}, there is $C_K>0$ such that 
	$$\|\widehat{f}(j)\| \leq C_K(1+\lambda_j)^{-K}, \ j \in \mathbb{N}_0.$$ 
	
	Thus, for $j \geq R$,
	$$
	\|\widehat{u}(j)\| \leq \frac{1}{L} C_K(1+\lambda_j)^{-m/\nu-K} \ \leq \ \frac{C_K}{L}(1+\lambda_j)^{-N}.
	$$
	
	It follows from \ref{f-smooth} that $u \in C^{\infty}(M)$, therefore $P$ is globally hypoelliptic.
	
	On the other hand, proceeding by contradiction, we will construct an element $f \in {\mathscr{D}'}(M) \backslash C^{\infty}(M)$ such that $Pf \in C^{\infty}(M)$, which will prove that $P$ is not globally hypoelliptic, contradicting the hypothesis.
	
	Suppose that for any $L$, $m$, and $R$, it is possible to find $j>R$ such that 
	$$m(\sigma(j)) < L(1+\lambda_j)^{m/\nu}.$$
	
	In particular, for $L=R=1$ and $m=-\nu$, there is $j_1>1$ such that $m(\sigma_P(j_1)) < (1+\lambda_{j_{1}})^{-1}$, thus there exists $a_{j_1} \in \mathbb{C}^{d_{j_1}}$ with $\|a_{j_1}\|=1$ and $\|\sigma_P(j_1)a_{j_1}\| < (1+\lambda_{j_{1}})^{-1}.$
	
	Next, for $L=1$, $R=j_1$ and $m=-2\nu$, there is $j_2>j_1$ such that $m(\sigma_P(j_2)) < (1+\lambda_{j_{2}})^{-2}$, thus there exists $a_{j_2} \in \mathbb{C}^{d_{j_2}}$ with $\|a_{j_2}\|=1$ and $\|\sigma_P(j_2)a_{j_2}\| < (1+\lambda_{j_{2}})^{-2}.$
	
	Proceeding by induction, we obtain a sequence $\{a_{j_k}\}_{k \in\mathbb{N}}$, with $a_{j_k} \in \mathbb{C}^{d_{j_k}}$, $\|a_{j_k}\|=1$ and
	\begin{equation}\label{const}
	\|\sigma_P(j_k)a_{j_k}\| < (1+\lambda_{j_{k}})^{-k},  \mbox{ for all } k \in\mathbb{N},
	\end{equation}
	
	Now define 
	$$
	f \doteq \sum_{\ell=0}^{\infty}\sum_{m=1}^{d_{\ell}}\widehat{f}(\ell,m)e_{\ell}^m,
	$$ 
	where 
	$$
	\widehat{f}(\ell) = 
	\begin{cases}
	a_{j_k}, & \mbox{ if } \ell=j_k \mbox{ for some } k \geq 1, \\
	0, & \mbox{ otherwise.}
	\end{cases}
	$$
	
	Since $\|\widehat{f}(\ell)\| \leq 1\leq (1+\lambda_\ell)$, for all $\ell \in\mathbb{N}_0$, by \eqref{u-distrib} we have $f \in {\mathscr{D}'}(M)$. Moreover, by \eqref{f-smooth} we have $f \notin C^{\infty}(M)$ because $\|\widehat{f}(j_k)\|=1$, for all $k \in \mathbb{N}$.
	
	Now let us prove that $Pf \in C^{\infty}(M)$. Since $P$ is strongly invariant with respect to $E$ we have 
	\begin{equation*}
		Pf \ = \ \sum_{k=0}^{\infty}\sum_{r=1}^{d_{j_k}}\widehat{Pf}(j_k,r)e_{j_k}^r\ = \ \sum_{k=0}^{\infty}\sum_{r=1}^{d_{j_k}}(\sigma_P(j_k)\widehat{f_k}(j_k))_r e_{j_k}^r.
	\end{equation*}
	By \eqref{const} we have 
	$$\|\widehat{Pf}(j_k)\|=\|\sigma_P(j_k)\widehat{f}(j_k)\| \leq (1+\lambda_{j_k})^{-k}.$$
	
	Let $N \in\mathbb{N}$ such that $1+\lambda_{j_k} \geq 1$, for all $k \geq N$. Thus for $k\geq N$ we have
	$$\|\widehat{Pf}(j_k)\| \leq (1+\lambda_{j_k})^{-k} \leq (1+\lambda_{j_k})^{-N},$$ 
	and for $k < N$ we obtain
	$$\|\widehat{Pf}(j_k)\| \leq (1+\lambda_{j_k})^{-k}=(1+\lambda_{j_k})^{N-k}(1+\lambda_{j_k})^{-N}, k \in \mathbb{N}.$$
	Setting $C_N\doteq \max\{(1+\lambda_{j_k})^{N-k}: {1 \leq k \leq N} \}$, then  
	$$\|\widehat{Pf}(j_k)\| \leq C_N(1+\lambda_{j_k})^{-N}, k \in \mathbb{N}.$$
	
	Thus, by condition \ref{f-smooth}, $Pf \in C^{\infty}(M)$, which finishes the proof.
\end{proof}

\begin{definition} \label{exp-hypoel}
	The exponent of hypoellipticity of a globally hypoelliptic operator $P$, denoted $ h(P)$,  is the supreme of all $m\in\mathbb{R}$ such that the condition \eqref{LM} is satisfied. If $P$ is not globally hypoelliptic, we set $ h(P)\doteq -\infty$.
\end{definition}

\begin{remark}
	If $P$ is a globally hypoelliptic invariant operator, then the property \eqref{LM} holds for all $m \leq  h(P)$.
	In particular, if $P$ has order $N$, then $h(P)\leq N.$
\end{remark}

%======================================================================
\section{Compact Lie Groups}\label{Liegroup_section}
%======================================================================
Let $G$ be a compact Lie group and $\mathfrak{g}$ its Lie algebra. By Theorem 3.6.2 of \cite{DK99},  $\mathfrak{g}$ can be written as
$$
\mathfrak{g}=\mathfrak{g}'\oplus \mathfrak{z},
$$
where $\mathfrak{g}'$ is a Lie subalgebra of $\mathfrak{g}$ on which the Killing form is negative definite, and $\mathfrak{z}$ is the kernel of the Killing form. Let $\left\langle{\cdot,\cdot} \right\rangle_{\mathfrak{g'}}$ be the inner product induced by the Killing form and let $\{Y_1, \dots, Y_n\}$ be a orthonormal basis of $\mathfrak{g}'$.  For $\mathfrak{z}$, choose any inner product $\text{Ad}$--invariant and consider $\{ Z_1, \dots Z_m\}$ an orthonormal basis of $\mathfrak{z}$. Observe that the sum of these inner products is an inner product Ad--invariant on $\mathfrak{g}$,
denoted by $\left\langle{\cdot,\cdot}\right\rangle_{\mathfrak{g}}$, 
and we have that $\mathcal{B}=\{Y_1,\dots,Y_n,Z_1,\dots,Z_m \}$ is an orthonormal basis of $\mathfrak{g}$. One can shows that
$$
\mathcal{L}_G= -\sum_{i=1}^n Y_i^2-\sum_{j=1}^mZ_j^2,
$$
is the Laplacian-Beltrami operator on $G$ for the metric induced by $\left\langle{\cdot,\cdot}\right\rangle_\mathfrak{g}$. Notice that
$$
\mathcal{L}_G= \Omega-\sum_{j=1}^mZ_j^2,
$$
where $\Omega$ is the Casimir element of $\mathfrak{g}$, which implies that $\mathcal{L}_G$ commutes with any element of $\mathfrak{g}$. Let $\widehat{G}$ be the set of equivalence classes of irreducible continuous unitary representations of $G$. Since $G$ is compact we have $\widehat{G}$ is a discrete set. Furthermore, for each equivalence class $[\xi_j] \in \widehat{G}$ we may pick a matricial representation $\xi_j:G\to \mathbb{C}^{d_\xi\times d_\xi}$ as representative. We have that the matrix elements of $\xi$ are eigenfunctions of $\mathcal{L}_G$ associated to the same  eigenvalue that we will denote by $-\lambda_{[\xi_j]}^2$, so
$$
\mathcal{L}_G(\xi_j)_{mn} = -\lambda_{[\xi_j]}^2(\xi_j)_{mn}, \quad 1\leq m,n\leq d_{\xi_j}. 
$$
Set 
$$
\{e_j^k\}_{ 1\leq k\leq d_j} = \left\{\sqrt{d_{\xi_j}}(\xi_j)_{mn} \right\}_{1 \leq m,n\leq d_{\xi_j}},
$$
where $d_j:=d_{\xi_j}^2$ and $k$ represents an entry of the matrix $(\xi_j)$ following the lexicographical order:
$$
(m,n) \leq (m',n') \iff m < m' \textrm{ or } \{m=m' \text{ and } n \leq n'\}.
$$
Then we have the subspaces
$$
H_j \equiv H_{[\xi_j]} \doteq \textrm{span} \{ e_j^k; 1 \leq k \leq j\} = \textrm{span} \{(\xi_j)_{mn}; 1 \leq m,n \leq d_{\xi_j} \}.
$$

By Peter-Weyl theorem, we have that $\{e_j^k\}_{1\leq k \leq j}$ is an orthonormal basis of $L^2(G)$ with the norm induced by the normalized Haar measure of $G$.

We point out that the condition \eqref{lambda>0} may not be satisfied because it can occurs $\lambda_{[\xi_j]} = \lambda_{[\xi_{j'}]}$ for $j \neq j'$. Since the eigenspaces of the Laplacian $\mathcal{L}_G$ are finite dimensional, a same eigenvalues can repeat only for finitely many representations and so this is not a problem for the results obtained. 

Let $P:\mathcal{D}'(G) \to \mathcal{D}'(G)$ be a left-invariant operator on $G$. In Section 6 of \cite{DR-Fmul} the authors show that
$$
\sigma_P(j) = \left(
\begin{array}{cccc} 
\tau_P(\xi_j) & 0 & \cdots & 0 \\
0 & \tau_P(\xi_j) & \cdots & 0 \\
\vdots & \vdots & \ddots & \vdots \\
0 & 0 & \cdots & \tau_P(\xi_j)
\end{array}\right) \in \mathbb{C}^{d_j \times d_j}
$$
satisfies the conditions (iii) and (iv) of Proposition \ref{DR_str_inv}, where each element $\tau_P(\xi_j)\in \mathbb{C}^{d_{\xi_j} \times d_{\xi_j}}$ has components $\tau_P(\xi_j)_{mn} = (P\xi_{j_{mn}})(e)$, $1 \leq m,n \leq d_{\xi_j}$, and $e$ is the unit element of $G$. Therefore $P$ is a strongly invariant operator on $G$ with respect to $\mathcal{L}_G$.

Assume that $\tau_P(\xi_j)$ is a diagonalizable matrix, for each $j \in \mathbb{N}_0$. Setting $\lambda_r(\xi_j)$ the eigenvalues of $\tau_P(\xi_j)$, $1 \leq r \leq d_{\xi_j}$, counted with multiplicity, we have that
$$
m(\sigma_P(j)) = m(\tau_P(\xi_j)) = \min_{1 \leq r \leq d_{\xi_j}} |\lambda_r(\xi_j)|.
$$
By Theorem \ref{GH}, the left-invariant operator $P$ is globally hypoelliptic if and only if there exist constants $L$, $N$ and $R$ such that
$$
|\lambda_r(\xi_j)| \geq L\left\langle{\xi_j}\right\rangle^N, \quad \textrm{for all } 1\leq r\leq d_{\xi_j}, \ \textrm{whenever } j \geq R,
$$
where $\left\langle{\xi_j}\right\rangle \doteq (1+\lambda_{[\xi_j]}^2)^{1/2}$.

\begin{example} Let $X\in \mathfrak{g}$, $q\in\mathbb{C}$, and consider the operator $$P=X+q.$$ 
Here $X$ acts on functions as
$$
Xf(x)=\frac{\textrm{d}}{\textrm{d}t} f(x\exp(tX))\bigg|_{t=0}, 
$$
and it extends naturally to distributions as 
$$\left\langle{Xu,f}\right\rangle \doteq -\left\langle{u,Xf}\right\rangle.$$ 

We have that $\tau_X(\xi_j)$ is diagonalizable for every $j\in \mathbb{N}_0$ and its eigenvalues can be written as $i\lambda_r(\xi_j)$, with $\lambda_r(\xi_j) \in \mathbb{R}$, for all $ j \in \mathbb{N}_0$, $1 \leq r \leq d_{\xi_j}$ (see Remark 10.4.20 of \cite{RT10}). 

Thus, $P$ is globally hypoelliptic if and only if there exist constants $L, N$ and $R$ such that
$$
|\lambda_r(\xi_j)-iq| \geq L\left\langle{\xi_j}\right\rangle^N, \ \textrm{for all } 1\leq r\leq d_{\xi_j}, \ \textrm{whenever } j \geq R.
$$ 
In particular, when $q \in\mathbb{R}\setminus\{0\}$, the operator $P=X+q$ is globally hypoelliptic in $G$.
\end{example}

\begin{example}
When $G=\mathbb{S}^3$ we can identify $\widehat{\mathbb{S}^3}$ with $\frac{1}{2}\mathbb{N}_0$ and the symbol of the neutral operator $\partial_0$ can be expressed as 
$$
\tau_{\partial_0}(\ell) = im\delta_{mn},
$$
for all $\ell \in \tfrac{1}{2}\mathbb{N}_0$, $-\ell \leq m,n \leq \ell$, $\ell-m,\ell-n \in \mathbb{N}_0$. Here, the dimension of each eigenspace is $d_\ell=2\ell+1$ and 
$$\left\langle{\ell}\right\rangle  = \sqrt{1+\ell(\ell+1)} \sim 1+\ell.$$ 

Hence, the operator $P=\partial_0+q$ is globally hypoelliptic if and only if there are constants $L,N,R$ such that 
$$
|m-iq| \geq L(1+\ell)^N,
$$
for all $-\ell \leq m \leq \ell, \ \ell-m \in \mathbb{N}_0,$ whenever $\ell \geq R$.

Therefore $P$ is globally hypoelliptic if and only if $q \notin i\frac{1}{2}\mathbb{Z}$, recovering the results from \cite{RTW14}.

\bigskip
Consider now the operator $P=-\mathcal{L}_G + \partial_0^2$. As discussed before, we have that 
$$
\tau_P(\xi_j)_{mn} = (\ell(\ell+1)-m^2)\delta_{mn}.
$$ 
Notice that $\ell^2 - m^2 \geq0$, so
$$
|\ell(\ell+1)-m^2| \geq \ell, \quad \textrm{for all} -\ell \leq m \leq \ell, \ell \in \tfrac{1}{2}\mathbb{N}.
$$
By Theorem 3.3 we conclude that $P$ is globally hypoelliptic with $h(P)=1$. On the other hand, for the operator $P=-\mathcal{L}_G-2\partial_0^2$ we have 
$$
\tau_P(\xi_j)_{mn} = (\ell(\ell+1)-2m^2)\delta_{mn}.
$$ 
Solving the equation $\ell^2+\ell -2m^2=0$ on $\ell$, we obtain 
$$
\ell = \frac{-1+\sqrt{1+8m^2}}{2},
$$
which lead us to the Pell's equation $u^2-8m^2=1$. Notice that $(u_1,m_1)=(3,1)$ is a solution of this equation. Moreover,
$$
\left\{ \begin{array}{cl}
u_{k+1}=&3u_k+8m_k \\
m_{k+1}=&3m_k+u_k
\end{array}
\right.
$$
is also solution of $u^2-8m^2=1$, for all $k \in \mathbb{N}$. We have $\ell \in \mathbb{N}$ because $u_k$ is even, for any $k\in\mathbb{N}$, and we have $m\leq \ell$. Therefore, the operator $P$ is not globally hypoelliptic because its symbol is singular for infinitely many indexes.
\end{example}

\begin{example}
	Let $G=\mathbb{T}^2(\cong \mathbb{R}^2/\mathbb{Z}^2)$ be the two-dimensional torus. Since the eigenfunctions of the Laplacian operator are
	$$
	(t,x)\in \mathbb{T}^2 \mapsto (2\pi)^{-2} e^{2\pi i(\xi t+\eta x)}, \mbox{ with } (\xi,\eta)\in \mathbb{Z}^2,
	$$
	denoting by $H_{(\xi,\eta)}\doteq \emph{\textrm{span}}\{ e^{2\pi i(\xi t+\eta x)}\}$, we have that 
	$$
	L^2(\mathbb{T}^2) = \bigoplus_{\ell\in\mathbb{N}_0} E_\ell, \mbox{ \ where each \ } E_\ell \doteq \bigoplus_{\xi^2+\eta^2=\ell} H_{(\xi,\eta)}.
	$$ 
	
	Finally, from Remark 2.6 of \cite{DR-Fmul}, invariant operators relative to $\{H_{(\xi,\eta)}\}$ are also invariant operators relative to $\{E_\ell\}$.
	
	\medskip
	Consider now the operator 
	$$P=\partial_t+c\partial_x, \mbox{ with } c\in\mathbb{C}$$ 
	
	Clearly $P\mathcal{L}_{\mathbb{T}^2} =  \mathcal{L}_{\mathbb{T}^2}P$ and $P$ is a strongly invariant operator with (matrix) symbol
	$$
	\tau_P(\xi,\eta)=i(\xi+c\eta) \in \mathbb{C}, \ (\xi,\eta)\in \mathbb{Z}^2
	$$
		
	Since $d_{H_{(\xi,\eta)}}=1$ and 
	$$
	\langle(\xi,\eta)\rangle = \sqrt{1+\xi^2+\eta^2} \sim (1+|\xi|+|\eta|).
	$$
	then $P=\partial_t+c\partial_x$ is globally hypoelliptic in $\mathbb{T}^2$ if and only if there are constants $C,N,R$ such that
	\begin{equation}\label{GW}
	|\xi+c\eta| \geq C(1+|\xi|+|\eta|)^N, \ \textrm{whenver } |\xi|+|\eta| \geq R.
	\end{equation}
	
	When \emph{Im}$(c)\neq0$ the condition \eqref{GW} is satisfied because we have $|\xi+c\eta|\geq C$, where $C=\max\{1,\emph{\textrm{Im}}(c) \}$, whenever $(\xi,\eta) \neq (0,0)$. If $c \in \mathbb{Q}$, we obtain infinitely many pairs $(\xi,\eta)\in\mathbb{Z}^2$ such that $|\xi+c\eta|=0$, so there is no $R$ satisfying \eqref{GW}. Finally, for $c\in \mathbb{R}\setminus\mathbb{Q}$ the condition \eqref{GW} is equivalent to say that $c$ is an irrational non-Liouville number. 
	
	Therefore, $P$ is globally hypoelliptic if and only if either $\emph{\textrm{Im}}(c) \neq0$ or $c$ is an irrational non-Liouville number.
\end{example}

%==============================================================================
\section{Global subelliptic estimates}\label{sect-subelliptic}
%==============================================================================

We denote by  $\ker P$ the kernel of a linear operator $P: {\mathscr{D}'}(M)\to{\mathscr{D}'}(M)$, and by $(\ker P)_{H^s}$ the kernel of $P$ in $H^{s}(M)$ which naturally inherits a Hilbert space structure from $H^{s}(M)$.

\begin{lemma} \label{ker}
	Let $P$ be a strongly invariant operator of order $d>0$. If $\ker P \subset C^{\infty}(M)$ then the dimension of $\ker P$ is finite. 
\end{lemma}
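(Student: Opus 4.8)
The plan is to argue by contradiction: if $\ker P$ were infinite-dimensional, I would exhibit a distribution lying in $\ker P$ that is not smooth, contradicting the hypothesis $\ker P\subset C^\infty(M)$. The construction is a minor variant of the one carried out in the second half of the proof of Theorem \ref{GH}.

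The first step is to translate membership in $\ker P$ into a condition on Fourier coefficients. Since $P$ is strongly invariant, relation \eqref{rel_distr} gives $\widehat{Pu}(\ell)=\sigma_P(\ell)\widehat{u}(\ell)$ for every $u\in\mathscr{D}'(M)$, so
$$
u\in\ker P\iff\widehat{u}(\ell)\in\ker\sigma_P(\ell)\ \text{ for every }\ell\in\mathbb{N}_0 .
$$
In particular, whenever $\ker\sigma_P(\ell)\neq\{0\}$, each function $x\mapsto\langle v,\overline{e_\ell}(x)\rangle=\sum_{k}v_k e_\ell^k(x)$ with $v\in\ker\sigma_P(\ell)$ lies in $E_{\lambda_\ell}\cap\ker P$, by \eqref{46}. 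Hence, if $\ker\sigma_P(\ell)=\{0\}$ for all but finitely many $\ell$, say for $\ell\notin F$ with $F$ finite, then every $u\in\ker P$ satisfies $\widehat{u}(\ell)=0$ for $\ell\notin F$, so $\ker P\subset\bigoplus_{\ell\in F}E_{\lambda_\ell}$ and $\dim\ker P\leq\sum_{\ell\in F}d_\ell<\infty$. Therefore, assuming $\dim\ker P=\infty$, there is a strictly increasing sequence $(j_k)_{k\in\mathbb{N}}$ in $\mathbb{N}_0$ with $\ker\sigma_P(j_k)\neq\{0\}$ for every $k$.

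Now pick, for each $k\in\mathbb{N}$, a vector $a_{j_k}\in\ker\sigma_P(j_k)\subset\mathbb{C}^{d_{j_k}}$ with $\|a_{j_k}\|=1$, and define
$$
u\doteq\sum_{k=1}^{\infty}\big\langle a_{j_k},\overline{e_{j_k}}(x)\big\rangle,\qquad\text{so that}\qquad \widehat{u}(\ell)=\begin{cases}a_{j_k},&\text{if }\ell=j_k\text{ for some }k,\\ 0,&\text{otherwise.}\end{cases}
$$
Because $\|\widehat{u}(\ell)\|\leq1\leq1+\lambda_\ell$ for all $\ell\in\mathbb{N}_0$, characterization \eqref{u-distrib} gives $u\in\mathscr{D}'(M)$; because $\|\widehat{u}(j_k)\|=1$ for all $k$ while $\lambda_{j_k}\to\infty$, characterization \eqref{f-smooth} gives $u\notin C^\infty(M)$. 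Finally, for every $\ell\in\mathbb{N}_0$ one has $\widehat{Pu}(\ell)=\sigma_P(\ell)\widehat{u}(\ell)=0$ (either $\widehat{u}(\ell)=0$, or $\widehat{u}(\ell)=a_{j_k}\in\ker\sigma_P(j_k)$), hence $Pu=0$, i.e. $u\in\ker P$. This contradicts $\ker P\subset C^\infty(M)$, and the lemma follows.

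I do not expect a genuine obstacle here. The only slightly delicate point is the dichotomy established in the second paragraph, namely that the presence of infinitely many indices with nontrivial $\ker\sigma_P(\ell)$ is exactly what distinguishes a finite- from an infinite-dimensional kernel; this is immediate once \eqref{rel_distr} is available. The specific value $d>0$ of the order is not essential to the argument beyond the standing fact that a strongly invariant operator acts continuously on $\mathscr{D}'(M)$, which is what makes \eqref{rel_distr} applicable.
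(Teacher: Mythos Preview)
Your proof is correct and takes a genuinely different route from the paper's. The paper argues via Sobolev compactness: since $P$ has positive order $d$, it maps $H^s(M)$ continuously to $H^{s-d}(M)$; the hypothesis $\ker P\subset C^\infty(M)$ forces $(\ker P)_{H^s}=(\ker P)_{H^{s-d}}$ as sets, so the Rellich--Kondrachov embedding $H^s\hookrightarrow H^{s-d}$ restricts to a compact bijection of $\ker P$ onto itself, which is only possible in finite dimension. You instead work directly on the Fourier side, showing that if infinitely many matrices $\sigma_P(\ell)$ are singular one can assemble a non-smooth element of $\ker P$ by placing unit kernel vectors at those frequencies --- exactly the mechanism used in the necessity half of Theorem~\ref{GH}. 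Your argument is more elementary (no compact embedding is needed), more explicit, and, as you correctly observe, does not actually require $d>0$; it therefore yields a marginally stronger statement. The paper's approach, by contrast, is shorter and foreshadows the repeated use of Rellich--Kondrachov throughout Section~\ref{sect-subelliptic}.
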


\begin{proof}
	By Corollary \ref{sobolev-cont}, $P$ extends to a continuous linear operator from $H^{s}(M)$ to $H^{s-d}(M)$, for every $s\in\mathbb R$.  Let $i: H^{s}(M) \rightarrow H^{s-d}(M)$ be the natural injection, then $i$ maps $(\ker P)_{H^s}$ onto $(\ker P)_{H^{s-d}}$, since $\ker P \subset C^{\infty}(M)$. It follows from the Rellich-Kondrachov Lemma that the inclusion $i:H^s(M)\hookrightarrow H^{s-d}(M)$ is compact, therefore $\ker P = (\ker P)_{H^s}=(\ker P)_{H^{s-d}}$ is finite-dimensional. 
%	{\small \color{red} n\~ao est\'a claro para mim porque a compacidade de $i$ implica em dimens\~ao finita do n\'ucleo.}
\end{proof}

\begin{proposition} \label{sigma*f>Cf}
	Let $P$ be a strongly invariant operator. Then, for all $j \in \mathbb{N}$, there exists $C_j>0$ such that 
	$$\|\sigma_P(j)\widehat{f}(j)\| \geq C_j \|\widehat{f}(j)\|, \mbox{ for all } f \perp (\ker P)_{H^s}.$$ 
\end{proposition}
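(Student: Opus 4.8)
\emph{Proof plan.} The plan is to reduce this to a finite-dimensional compactness argument carried out one Fourier block at a time, after first using the orthogonality hypothesis to pin down where $\widehat{f}(j)$ can live. Fix $j\in\mathbb{N}$ and regard $\sigma_P(j)$ as a linear map on $\mathbb{C}^{d_j}$; write $K_j\doteq\ker\sigma_P(j)$ and $V_j\doteq K_j^{\perp}$ for its orthogonal complement with respect to the standard inner product of $\mathbb{C}^{d_j}$.

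First I would show that $\widehat{f}(j)\in V_j$ whenever $f\perp(\ker P)_{H^s}$. For any $v\in K_j$, set $g_v\doteq\sum_{k=1}^{d_j}v_k\,e_j^k\in E_{\lambda_j}$. This is a finite linear combination of smooth eigenfunctions, hence $g_v\in C^{\infty}(M)\subset H^s(M)$, and by \eqref{rel_distr} its image has Fourier coefficients $\widehat{Pg_v}(\ell)=\sigma_P(\ell)\widehat{g_v}(\ell)$, which vanish for every $\ell$: they are $0$ for $\ell\neq j$, and for $\ell=j$ they equal $\sigma_P(j)v=0$. Since a distribution with all Fourier coefficients zero is zero, $Pg_v=0$, so $g_v\in(\ker P)_{H^s}$. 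Using the description of the $H^s$ inner product in terms of Fourier coefficients — in which, within the single eigenspace $E_{\lambda_j}$, the weight $(1+\lambda_j)^{2s/\nu}$ is a constant — the hypothesis $f\perp(\ker P)_{H^s}$ gives
$$0=\langle f,g_v\rangle_{H^s}=(1+\lambda_j)^{2s/\nu}\,\langle\widehat{f}(j),v\rangle,$$
so $\widehat{f}(j)\perp v$ for every $v\in K_j$, i.e. $\widehat{f}(j)\in V_j$.

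Next, the restriction $\sigma_P(j)|_{V_j}$ is injective by the very definition of $V_j$ (if $v\in V_j$ and $\sigma_P(j)v=0$ then $v\in K_j\cap V_j=\{0\}$), so $v\mapsto\|\sigma_P(j)v\|$ is continuous and strictly positive on the unit sphere of $V_j$, which is compact because $V_j$ is finite-dimensional; let $C_j>0$ be its minimum (if $V_j=\{0\}$ take $C_j\doteq1$, in which case $\widehat{f}(j)=0$ and there is nothing to prove). Applying this bound to $\widehat{f}(j)/\|\widehat{f}(j)\|\in V_j$ when $\widehat{f}(j)\neq0$, and noting the inequality is trivial otherwise, yields $\|\sigma_P(j)\widehat{f}(j)\|\geq C_j\|\widehat{f}(j)\|$ for every $f\perp(\ker P)_{H^s}$.

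There is no genuinely hard step here; the two points that need care are: (i) that orthogonality in the \emph{weighted} $H^s$ inner product still forces block-wise orthogonality in $\mathbb{C}^{d_j}$, which works precisely because each Fourier block lies inside one eigenspace of $E$ on which the Sobolev weight is a single constant; and (ii) that the constant must be allowed to depend on $j$, since $\sigma_P(j)$ may fail to be invertible for infinitely many $j$ and $m(\sigma_P(j))$ itself may be $0$ — the content of the proposition is exactly that, after projecting away from $\ker\sigma_P(j)$, one recovers a positive lower bound on each block.
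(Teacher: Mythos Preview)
Your proof is correct and, in fact, more direct than the paper's. Both arguments hinge on the same key observation, namely that $f\perp(\ker P)_{H^s}$ forces $\widehat{f}(j)\perp\ker\sigma_P(j)$ in $\mathbb{C}^{d_j}$ (because every vector of $\ker\sigma_P(j)$ produces, via the eigenfunctions $e_j^k$, a smooth element of $\ker P$, and the $H^s$ weight on the block $E_{\lambda_j}$ is a single constant). The paper establishes this by contradiction in a preliminary step, while you state and prove it directly. The difference comes in extracting the constant $C_j$: the paper argues by contradiction, builds a normalized sequence $h_k$ supported in $E_{\lambda_j}$, and invokes the Rellich--Kondrachov lemma to pass to a convergent subsequence in $H^s(M)$; you instead observe that $\sigma_P(j)$ is injective on the finite-dimensional subspace $V_j=(\ker\sigma_P(j))^{\perp}$ and take $C_j$ to be the minimum of $\|\sigma_P(j)v\|$ over its compact unit sphere. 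Your route is more elementary---the Rellich--Kondrachov step in the paper is really overkill here, since the whole problem lives inside a single finite-dimensional eigenspace---while the paper's approach has the (minor) virtue of being phrased entirely in the ambient Sobolev language and foreshadowing the genuine Rellich--Kondrachov arguments needed later in Section~\ref{sect-subelliptic}.
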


\begin{proof}
	First, note that if $f \perp (\ker P)_{H^s}$ and $\widehat{f}(j)\neq 0$, for some $j \in \mathbb{N}$, then
	$$\|\sigma_P(j)\widehat{f}(j)\| \neq 0.$$ 
	
	Indeed, suppose that there are $j_0 \in \mathbb{N}$ and $f_0 \perp (\ker P)_{H^s}$ such that $\|\sigma{_P}(j_0) \widehat{f_0}(j_0)\|=0$ and $\widehat{f_0}(j_0)\neq0$. Note that 
	$$f|_{E_{\lambda_{j_0}}}= \sum_{k=1}^{d_{j_0}} \widehat{f}(j_0,k)e_{j_0}^k$$ 
	and, by construction, $P f|_{E_{\lambda_{j_0}}} = (\sigma_P(j_0)\widehat{f_0}(j_0))^{\top}e_{j_0}=0$. 
	
	This way, $f|_{E_{\lambda_{j_0}}}\!\! \in\ker P$ and $\langle f, f|_{E_{\lambda_{j_0}}}\rangle_{H^s}=0$, since $f \perp \ker P$. So $\widehat{f}(j_0)=0$, which leads us to a contradiction.
	
	Now we prove the proposition. Fixed $j  \in \mathbb{N} $, suppose by contradiction that there is a sequence of functions $f_k \perp (\ker P)_{H^s}$ such that $\widehat{f_k}(j)\neq0$ and $$\|\sigma_P(j)\widehat{f_k}(j)\|\leq \frac{1}{k}\|\widehat{f_k}(j)\|, \ k\in\mathbb{N}.$$
	
	Thus, for $h_k=\dfrac{1}{\|\widehat{f_k}(j)\|}\sum\limits_{r=1}^{d_j}\widehat{f_k}(j,r)e_j^r$ we have $h_k \neq 0$ and
	\begin{equation}
	\label{hzero}
	\|\sigma_P(j)\widehat{h_k}(j)\| \leq \frac{1}{k}.
	\end{equation}
	
	Moreover
	$$\|h_k\|_t^2 = \sum_{r=1}^{d_j}(1+\lambda_j)^{2t/\nu}|\widehat{h_k}(j,r)|^2 = d_j(1+\lambda_j)^{2t/\nu}, \ k\in \mathbb{N}_0.$$
	
	Thus, the sequence $\{h_k\}_{k\in\mathbb{N}}$ is limited in $H^t(M)$, for all $t \in \mathbb{R}$. From the Rellich-Kondrachov Lemma, we have that $\{h_k\}$ has, for every $t \in \mathbb{R}$, a convergent subsequence. In particular, by also denoting $\{h_k\}$ the convergent subsequence, there exists $g \in H^s(M)$ such that $h_k \rightarrow g$ in $H^s(M)$, which implies that 
	\begin{equation}\label{s-norm}
	\|g\|_s= \sqrt{d_j}(1+\lambda_j)^{s/\nu}.
	\end{equation}
	
	Since $h_k \perp \ker P$, for each $k \in \mathbb{N}$, we obtain $g \perp \ker P$. By continuity of $P$, we have  $Ph_k \rightarrow Pg$. By \eqref{hzero}, we have $Ph_k \rightarrow 0$. Thus, $Pg=0$ and $g \in \ker P$. Therefore, $g=0$, which contradicts  \eqref{s-norm}.
\end{proof}

For the next result let us recall that the exponent of hypoellipticity $h(P)$ of a globally hypoelliptic operator $P$, is the supreme of all $m\in\mathbb{R}$ such that 
	\begin{equation*}
m(\sigma_P(j)) \geq L(1+\lambda_j)^{m/\nu}, \text{ whenever }  j \geq R,
\end{equation*}
where the constants $L, m$ and $R$ are given by Theorem \ref{GH}.

\begin{proposition} Let $P$ be a strongly invariant operator. If $P$ is globally hypoelliptic, then there is $C>0$, such that, for all $m <h(P)$, we have
	\begin{equation}\label{alpha-thm}
	\|Pf\|_s \geq C\|f\|_{s+m},  \text{ for all } \ f \perp (\ker P)_{H^s}. 
	\end{equation}
\end{proposition}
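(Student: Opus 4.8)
The plan is to move everything to Fourier coefficients and exploit that $P$ acts block-diagonally. By \eqref{rel_distr} and the $H^t$-description $\|g\|_t^2=\sum_{j\ge0}(1+\lambda_j)^{2t/\nu}\|\widehat{g}(j)\|^2$ recorded in Section \ref{sect-notation}, both sides of \eqref{alpha-thm} split into sums over $j\in\mathbb N_0$, and I would estimate the blocks separately: the infinitely many ``large'' blocks via the hypoellipticity bound \eqref{LM}, and the finitely many ``small'' ones via Proposition \ref{sigma*f>Cf}, which is precisely where the orthogonality to $(\ker P)_{H^s}$ enters.

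Fix $m<h(P)$. First I would note, using the remark following Definition \ref{exp-hypoel}, that \eqref{LM} holds at the exponent $h(P)$: there are $L>0$ and $R\in\mathbb N$ with $m(\sigma_P(j))\ge L(1+\lambda_j)^{h(P)/\nu}$ for $j\ge R$. Since $1+\lambda_j\ge1$ and $h(P)-m>0$, the \emph{same} $L$ and $R$ yield $m(\sigma_P(j))\ge L(1+\lambda_j)^{m/\nu}$ for every $j\ge R$ (this uniformity is what will make the final constant independent of $m$). Then, for $f\perp(\ker P)_{H^s}$, I would write
\[
\|Pf\|_s^2=\sum_{0\le j<R}(1+\lambda_j)^{2s/\nu}\|\sigma_P(j)\widehat{f}(j)\|^2+\sum_{j\ge R}(1+\lambda_j)^{2s/\nu}\|\sigma_P(j)\widehat{f}(j)\|^2,
\]
and bound the tail using $\|\sigma_P(j)\widehat{f}(j)\|\ge m(\sigma_P(j))\|\widehat{f}(j)\|\ge L(1+\lambda_j)^{m/\nu}\|\widehat{f}(j)\|$, which gives $\sum_{j\ge R}(1+\lambda_j)^{2s/\nu}\|\sigma_P(j)\widehat{f}(j)\|^2\ge L^2\sum_{j\ge R}(1+\lambda_j)^{2(s+m)/\nu}\|\widehat{f}(j)\|^2$.

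For the finitely many indices $0\le j<R$, I would apply Proposition \ref{sigma*f>Cf} (whose proof goes through unchanged for $j=0$) to obtain constants $C_j>0$ with $\|\sigma_P(j)\widehat{f}(j)\|\ge C_j\|\widehat{f}(j)\|$ for all $f\perp(\ker P)_{H^s}$; then writing $(1+\lambda_j)^{2s/\nu}=(1+\lambda_j)^{-2m/\nu}(1+\lambda_j)^{2(s+m)/\nu}$ and using $(1+\lambda_j)^{-2m/\nu}\ge(1+\lambda_j)^{-2h(P)/\nu}$ for $m<h(P)$ (note $h(P)<\infty$, since the symbol of a strongly invariant operator has moderate growth), the low-frequency part is $\ge\delta\,(\min_{j<R}C_j)^2\sum_{j<R}(1+\lambda_j)^{2(s+m)/\nu}\|\widehat{f}(j)\|^2$, with $\delta:=\min_{0\le j<R}(1+\lambda_j)^{-2h(P)/\nu}>0$. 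Adding the two parts and taking $C:=\big(\min\{L^2,\delta(\min_{j<R}C_j)^2\}\big)^{1/2}$ — a constant depending only on $P$ and $s$ — gives $\|Pf\|_s^2\ge C^2\|f\|_{s+m}^2$, i.e. \eqref{alpha-thm}. (If $m>0$ and $f\notin H^{s+m}(M)$, so that $\|f\|_{s+m}=\infty$, the tail estimate already forces $\|Pf\|_s=\infty$ and the inequality is trivial.)

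The main obstacle is the low-frequency block: there $\sigma_P(j)$ may fail to be invertible — exactly when $\ker P\neq\{0\}$ — so no estimate of the form \eqref{alpha-thm} can hold on all of $H^s(M)$, and it is only after restricting to the orthogonal complement of $(\ker P)_{H^s}$ that a lower bound is recovered; this is the content of Proposition \ref{sigma*f>Cf}, which rests on the compactness of the Sobolev embeddings (and, in spirit, on the finite-dimensionality of $\ker P$, Lemma \ref{ker}). A secondary, bookkeeping point, handled above, is that the constant $C$ must not depend on $m$: this works because $(L,R)$ can be taken at the exponent $h(P)$, and the only remaining $m$-dependence sits in the finitely many weight ratios $(1+\lambda_j)^{-2m/\nu}$, which are bounded below uniformly for $m<h(P)$ because $h(P)$ is finite.
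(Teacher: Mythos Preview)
Your proof is correct and follows essentially the same line as the paper's: write $\|Pf\|_s^2$ as a sum over Fourier blocks, handle the tail $j\ge R$ via the hypoellipticity bound \eqref{LM} and the finitely many low frequencies $j<R$ via Proposition~\ref{sigma*f>Cf}, then combine. You are in fact more careful than the paper about the uniformity of $C$ in $m$: you fix $(L,R)$ once at the exponent $h(P)$ and bound the low-frequency weight ratios by $(1+\lambda_j)^{-2h(P)/\nu}$, whereas the paper's constants $L$, $R$ and $\widetilde{C}=\min_{1\le j<R}C_j(1+\lambda_j)^{-m/\nu}$, as written, depend on $m$.
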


\begin{proof}
	Since $P$ is globally hypoelliptic, by Theorem \ref{GH} and Definition \ref{exp-hypoel}, for $m<h(P)$, there are $L>0$ and $R\in \mathbb{N}$ such that
	$$m(\sigma_P(j)) \geq L(1+\lambda_j)^{m/\nu},  \mbox{ for all } j \geq R.$$
	And by Theorem \ref{sigma*f>Cf}, for each $j \in \mathbb{N}$, there is $C_j>0$ such that
	$$\|\sigma_P(j)\widehat{f}(j)\| \geq C_j \|\widehat{f}(j)\|, \mbox{ for all } f \perp (\ker P)_{H^s}.$$
	Thus
	\begin{align*}
		\|Pf\|_s^2 & =  \sum_{j=0}^{\infty}(1+\lambda_j)^{2s/\nu}\|\sigma_P(j)\widehat{f}(j)\|^2\\ 
		& \ \geq  \sum_{j=1}^{R-1}(1+\lambda_j)^{2s/\nu} C_j\|\widehat{f}(j)\|^2 + \sum_{j=R}^{\infty} (1+\lambda_j)^{2s/\nu} L^2(1+\lambda_j)^{2m/\nu}\|\widehat{f}(j)\|^2\\ 
		& \ \geq  \widetilde{C}^2\sum_{j=1}^{R-1}(1+\lambda_j)^{2(s+m)/\nu}\|\widehat{f}(j)\|^2 + L^2 \sum_{j=R}^{\infty}(1+\lambda_j)^{2(s+m)/\nu}\|\widehat{f}(j)\|^2 \\ 
		&\geq \ C^2 \sum_{j=1}^{\infty}(1+\lambda_j)^{2(s+m)/\nu}\|\widehat{f}(j)\|^2 = C^2\|f\|_{s+m}^2.
	\end{align*}
	where $\widetilde{C}\doteq \min\{C_j(1+\lambda_j)^{-m/\nu}; 1 \leq j < R\}$ and $C\doteq \min\{\widetilde{C},L\}>0$.
\end{proof}

The last proposition gives a necessary condition for the global hypoellipticity of strongly invariant operators on $M$. On the other hand, it is easy to prove that if inequality \eqref{alpha-thm} holds for any $m> 0$ and $\ker P \subset C^\infty(M)$, then this condition is also sufficient. Therefore, given its importance, we shall highlight this condition for further reference: 
\begin{align}\label{alpha} %\tag{$\alpha$}
\begin{cases}
 \ker P \subset C^\infty(M) \mbox{ and } \exists C>0 \mbox{ such that,  if }  m>0 \mbox{ and } s \in\mathbb{R}  \\
 \mbox{then }  \|Pf\|_s \geq C\|f\|_{s+m}, \mbox{ for all } f \perp (\ker P)_{H^s}.
\end{cases}
\end{align}

\begin{proposition}\label{eqab}
	Let $P$ be a strongly invariant operator of order $d$ and $m>0$. Then $P$ satisfies \eqref{alpha} if and only if there is a constant $K>0$ such that
	\begin{equation} \label{beta}
	\|f\|_{s+m}\leq K(\|f\|_s+\|Pf\|_s),  \ f \in C^{\infty}(M). %\tag{$\beta$}
	\end{equation}
\end{proposition}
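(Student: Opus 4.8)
The plan is to prove the two implications separately, using the characterization of Sobolev norms via Fourier coefficients from item (8) in Section \ref{sect-notation} and the decomposition of any $f$ into its component in $(\ker P)_{H^s}$ and its orthogonal complement.

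First I would prove that \eqref{beta} implies \eqref{alpha}. Assume \eqref{beta} holds with constant $K$. I claim $\ker P\subset C^\infty(M)$: if $Pf=0$ then \eqref{beta} gives $\|f\|_{s+m}\leq K\|f\|_s$ for every $s$, and iterating (replacing $s$ by $s+m$, $s+2m$, and so on) yields $\|f\|_{s+km}\leq K^k\|f\|_s<\infty$ for all $k$; since $m>0$ this forces $f\in\bigcap_t H^t(M)=C^\infty(M)$. Wait — more carefully, one needs $f\in H^s$ for every $s$, which follows because $f\in H^{s_0}$ for some $s_0$ (as $f\in\mathscr D'(M)$), hence $f\in H^{s_0+km}$ for all $k\in\mathbb N_0$, hence $f\in\bigcap_s H^s(M)=C^\infty(M)$. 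For the estimate \eqref{alpha}, fix $m>0$ and $s\in\mathbb R$ and let $f\perp(\ker P)_{H^s}$; I want $\|Pf\|_s\geq C\|f\|_{s+m}$. The natural route: since $P$ is $E$-invariant, $\widehat{Pf}(\ell)=\sigma_P(\ell)\widehat f(\ell)$ acts block-diagonally, and on the orthogonal complement of $(\ker P)_{H^s}$ Proposition \ref{sigma*f>Cf} gives, for each fixed $j$, $\|\sigma_P(j)\widehat f(j)\|\geq C_j\|\widehat f(j)\|$ — but with $j$-dependent constants, so this alone does not suffice and one needs the uniform behavior at infinity. Here \eqref{beta} is applied to the Fourier partial sums or, cleaner, one applies \eqref{beta} directly: for $f\perp(\ker P)_{H^s}$ of moderate regularity, \eqref{beta} gives $\|f\|_{s+m}\leq K(\|f\|_s+\|Pf\|_s)$, and I must absorb the $\|f\|_s$ term. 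The standard trick: since $f\perp(\ker P)_{H^s}$, Proposition \ref{sigma*f>Cf} combined with the asymptotic lower bound coming from $m$ being below the hypoellipticity exponent — actually, more directly — apply \eqref{beta} with $s$ replaced by $s-m$ (a shift) is not quite it either. The cleanest argument: replace $f$ by a scaled/truncated version and use a standard interpolation-plus-Rellich compactness argument. Concretely, suppose \eqref{alpha} fails; then there is a sequence $f_k\perp(\ker P)_{H^s}$ with $\|f_k\|_{s+m}=1$ and $\|Pf_k\|_s\to 0$. By \eqref{beta} the sequence is bounded in $H^{s+m}$, hence by Rellich-Kondrachov (used as in the proof of Proposition \ref{sigma*f>Cf}) a subsequence converges in $H^s$ to some $g$ with $g\perp(\ker P)_{H^s}$; by continuity of $P$ (Proposition \ref{sobolev-cont}) $Pg=0$, so $g\in\ker P$, hence $g=0$; but then $\|f_k\|_s\to0$, and feeding this back into \eqref{beta} gives $\|f_k\|_{s+m}\to 0$, contradicting $\|f_k\|_{s+m}=1$.

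For the converse, assume \eqref{alpha}. Let $f\in C^\infty(M)$ and write $f=f_0+f_1$ where $f_0\in(\ker P)_{H^s}$ and $f_1\perp(\ker P)_{H^s}$; since $\ker P\subset C^\infty(M)$ is finite-dimensional (Lemma \ref{ker}), the projection onto it is continuous on every $H^t(M)$, so $f_0,f_1\in C^\infty(M)$ and $\|f_0\|_{s+m}\leq K_0\|f_0\|_s\leq K_0\|f\|_s$ for a constant $K_0$ depending only on the finite-dimensional space $(\ker P)$ and on $s,m$ (all norms on a finite-dimensional space being equivalent). For $f_1$, note $Pf=Pf_1$ (since $Pf_0=0$), and \eqref{alpha} gives $\|f_1\|_{s+m}\leq C^{-1}\|Pf_1\|_s=C^{-1}\|Pf\|_s$. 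Combining, $\|f\|_{s+m}\leq\|f_0\|_{s+m}+\|f_1\|_{s+m}\leq K_0\|f\|_s+C^{-1}\|Pf\|_s\leq K(\|f\|_s+\|Pf\|_s)$ with $K=\max\{K_0,C^{-1}\}$, which is \eqref{beta}.

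The main obstacle I anticipate is the absorption step in the first implication — passing from the a priori estimate \eqref{beta} (which has the lower-order term $\|f\|_s$ on the right) to the clean subelliptic estimate \eqref{alpha} (without it). The compactness argument via Rellich-Kondrachov handles this, but it must be set up carefully so that the limit genuinely lies in $\ker P$ and is orthogonal to $(\ker P)_{H^s}$; this is exactly parallel to the argument already used in the proof of Proposition \ref{sigma*f>Cf}, so it should go through, and the finite-dimensionality of $\ker P$ from Lemma \ref{ker} is what makes the orthogonal-projection bookkeeping in the converse harmless.
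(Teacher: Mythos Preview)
Your proposal is correct and follows essentially the same route as the paper: for \eqref{alpha}$\Rightarrow$\eqref{beta} you decompose $f=f_0+f_1$ along $(\ker P)_{H^s}$ and its orthogonal complement, invoke Lemma~\ref{ker} to compare norms on the finite-dimensional kernel, and use \eqref{alpha} on $f_1$; for \eqref{beta}$\Rightarrow$\eqref{alpha} you iterate \eqref{beta} to get $\ker P\subset C^\infty(M)$ and then run the same Rellich--Kondrachov contradiction argument (normalize in $H^{s+m}$, extract a limit in $H^s$, show it is both in and orthogonal to $\ker P$, and feed $g=0$ back into \eqref{beta}) that the paper uses. The only cosmetic difference is that the paper labels the two directions ``Sufficiency'' and ``Necessity'' in the opposite order from your presentation.
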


\begin{proof} \emph{Sufficiency.} Recall that $P$ extends to a continuous linear operator on all Sobolev spaces. Therefore, if $f \in C^{\infty}(M)$, we can write $f=f_1+f_2$, with $f_1 \in(\ker P)_{H^s}$ and $f_2\perp(\ker P)_{H^s}$. Thus, $Pf=Pf_2$ and $\|f\|_s^2 = \|f_1\|_s^2 + \|f_2\|_s^2$. In particular, $\|f\|_s \geq \|f_1\|_s$.
	
	Since $\ker P \subset C^{\infty}(M)$, by Lemma \ref{ker}, the dimension of $\ker P$ is finite and all the norms on $\ker P$ are equivalent. Therefore, there is $K_1>0$ such that 
	$$\|g\|_{s+m} \leq K_1\|g\|_s,  \  g\in \ker P.$$
	
	By \eqref{alpha}, we have $\|Pf_2\|_s \geq C\|f_2\|_{s+m}$, thus
	\begin{align*} 
		\|f\|_{s+m} & \leq \ \|f_1\|_{s+m}+\|f_2\|_{s+m} \ \leq \
		K_1\|f_1\|_s+C^{-1}\|Pf_2\|_s\\ 
		& \leq \  K_1\|f\|_s+C^{-1}\|Pf\|_s\ \leq \ K(\|f\|_s+\|Pf\|_s).
	\end{align*}

	\noindent \emph{Necessity.} Let $f \in {\mathscr{D}'}(M)$ such that $Pf=0$. Since ${\mathscr{D}'}(M)=\bigcup_s H^s(M)$, then we have $f \in H^s(M)$ for some $s \in \mathbb{R}$. By \eqref{beta} we have $f \in H^{s+m}(M)$ and replacing $s$ by $s+m$ we get $f \in H^{s+2m}$. By induction we have $f \in \bigcap_s{H^s(M)}=C^{\infty}(M),$ hence $\ker P \subset C^{\infty}(M)$.
	
	Now, assume that the inequality \eqref{alpha} is not valid, then it is possible to obtain a sequence of functions $f_j \perp (\ker P)_{H^s}$ such that $\|f_j\|_{s+m}=1$, for all $j \in \mathbb{N}$ and $\|Pf_j\|_s \rightarrow 0$, as $j\rightarrow \infty$. 
	
	By the Rellich-Kondrachov Lemma, $\{f_{j}\}$ has a convergent subsequence \\ $f_{j_k} \rightarrow g$ in $H^s(M)$ and, by continuity, we have $Pf_{j_k} \rightarrow Pg$ in $H^{s-d}(M)$. Since $\|Pf_j\|_s\rightarrow 0$, we have $\|Pf_j\|_{s-d} \rightarrow 0$, therefore $Pg=0$ and $g \in \ker P$. However, $f_j \perp (\ker P)_{H^s}$ and $f_j \rightarrow g \in H^s(M)$, hence $g \perp (\ker P)_{H^s}$. 
	
	In this way, we have  $g \in (\ker P)_{H^s} \cap (\ker P)^{\perp}_s$, which implies that  $g=0$. On the other hand, by \eqref{beta}, 
	$1=\|f_j\|_{s+m} \leq K(\|f_j\|_s+\|Pf_j\|_s).$
	When $j \rightarrow \infty$ we have $1 \leq K\|g\|_s = 0$, which is a contradiction. So the inequality \eqref{alpha} is true.
\end{proof}

\begin{proposition}
	Let $P$ be a strongly invariant operator of order $d\geq 0$ and $m>0$. Then $\eqref{beta}$ implies that 
	\begin{align}\label{gamma} %\tag{$\gamma$}
	\begin{cases}
	\ker P \subset C^\infty(M) \mbox{ and  } P(C^{\infty}(M)) \mbox{ is closed }\\
	\mbox{in } C^{\infty}(M) \mbox{ with the } {\mathscr{D}'}(M) \mbox{ relative topology,}
	\end{cases}
	\end{align}
	that is, if $f_j, g \in C^{\infty}(M)$ and $Pf_j \rightarrow g$ in $H^s(M)$, for some $s \in\mathbb{R}$, then $g=Ph$, for some $h\in C^{\infty}(M)$.
\end{proposition}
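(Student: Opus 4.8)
The plan is to prove the two conclusions of \eqref{gamma} separately, using \eqref{beta} as the sole hypothesis. The inclusion $\ker P \subset C^\infty(M)$ is already established inside the proof of Proposition \ref{eqab} (the ``Necessity'' part): if $Pf=0$ with $f\in H^s(M)$, then \eqref{beta} bootstraps $f$ into $H^{s+m}(M)$, and iterating gives $f\in\bigcap_t H^t(M)=C^\infty(M)$. So the substantive content is the closedness statement, and I would devote the bulk of the argument to it.

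For closedness, suppose $f_j,g\in C^\infty(M)$ with $Pf_j\to g$ in $H^s(M)$ for some fixed $s\in\mathbb R$. The first move is to reduce to representatives orthogonal to the kernel: write $f_j = f_j' + f_j''$ with $f_j'\in(\ker P)_{H^s}$ and $f_j''\perp(\ker P)_{H^s}$, so that $Pf_j = Pf_j''$ and we may as well assume $f_j\perp(\ker P)_{H^s}$ from the start. Next I would like to upgrade the $H^s$-convergence of $\{Pf_j\}$ to control of $\{f_j\}$ itself. Since $\{Pf_j\}$ is Cauchy in $H^s(M)$, apply \eqref{beta} to the differences $f_j-f_k$ (which are smooth): $\|f_j-f_k\|_{s+m}\le K(\|f_j-f_k\|_s+\|Pf_j-Pf_k\|_s)$. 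This is not yet closed because of the $\|f_j-f_k\|_s$ term, but that term can be handled by the now-standard compactness trick: either $\{f_j\}$ is bounded in $H^{s+m}(M)$, in which case Rellich--Kondrachov gives a subsequence converging in $H^s(M)$ and then \eqref{beta} shows that same subsequence is Cauchy in $H^{s+m}(M)$; or $\|f_j\|_{s+m}\to\infty$ along a subsequence, in which case normalizing $h_j = f_j/\|f_j\|_{s+m}$ and passing to the limit produces $h\in(\ker P)_{H^s}\cap(\ker P)^\perp$ with $\|h\|_{s+m}$-norm not decaying --- exactly the contradiction exploited in Proposition \ref{eqab}. Actually a cleaner route is to invoke Proposition \ref{eqab} directly: \eqref{beta} is equivalent to \eqref{alpha}, which gives the honest a priori estimate $\|Pf\|_s\ge C\|f\|_{s+m}$ for $f\perp(\ker P)_{H^s}$. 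Applying this to $f_j-f_k$ yields $\|f_j-f_k\|_{s+m}\le C^{-1}\|Pf_j-Pf_k\|_s\to 0$, so $\{f_j\}$ is Cauchy in $H^{s+m}(M)$, hence converges to some $h\in H^{s+m}(M)$.

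It remains to see that $h\in C^\infty(M)$ and $Ph=g$. By continuity of $P$ from $H^{s+m}(M)$ to $H^{s+m-d}(M)$ (Corollary \ref{sobolev-cont}, using $\mathrm{ord}(P)=d$), we get $Pf_j\to Ph$ in $H^{s+m-d}(M)$; but also $Pf_j\to g$ in $H^s(M)$ and hence in $H^{s+m-d}(M)$ if $m-d\le 0$, or we simply note both limits agree as distributions, so $Ph=g$. Since $g=Ph\in C^\infty(M)$ by assumption, the inclusion $\ker P\subset C^\infty(M)$ together with $\eqref{beta}$ lets us bootstrap $h$: writing $Ph=g\in H^t(M)$ for every $t$ and feeding $g$ through \eqref{beta} (or \eqref{alpha}) shows $h\in H^{t}(M)$ for arbitrarily large $t$, after first splitting off the kernel component which is automatically smooth --- concretely, $h = h_0 + h_1$ with $h_0\in\ker P\subset C^\infty(M)$ and $h_1\perp(\ker P)$, and $\|h_1\|_{t+m}\le C^{-1}\|Ph_1\|_t = C^{-1}\|g\|_t<\infty$ for all $t$, so $h_1\in C^\infty(M)$. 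Thus $h\in C^\infty(M)$ and $Ph=g$, proving $P(C^\infty(M))$ is closed in the stated sense.

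The main obstacle is the one already flagged: \eqref{beta} on its own only gives an estimate with the lower-order term $\|f\|_s$ present, which does not immediately yield a Cauchy property for $\{f_j\}$. The clean way around it is to route through the equivalence in Proposition \ref{eqab} to get the kernel-orthogonal estimate \eqref{alpha}, where the lower-order term disappears; alternatively one runs the Rellich--Kondrachov dichotomy by hand as in the proof of Proposition \ref{eqab}. Either way the remaining steps --- reducing to the kernel-orthogonal complement, identifying the limit, and bootstrapping to smoothness --- are routine given the tools already assembled in the excerpt.
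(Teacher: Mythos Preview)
Your proposal is correct. The paper's own argument differs in how it extracts a limit for $\{f_j\}$: instead of routing through Proposition~\ref{eqab} to obtain the kernel-orthogonal estimate \eqref{alpha} and concluding that $\{f_j\}$ is Cauchy in $H^{s+m}(M)$ directly (your ``cleaner route''), the paper runs the bounded/unbounded dichotomy on $\{\|f_j\|_s\}$ by hand from \eqref{beta} alone. If $\{\|f_j\|_s\}$ is bounded, \eqref{beta} bounds $\{\|f_j\|_{s+m}\}$, Rellich--Kondrachov produces a subsequential limit $h$ in $H^s(M)$, and continuity of $P$ (here using $d\ge 0$ so that $H^s\hookrightarrow H^{s-d}$) gives $Ph=g$; if unbounded, normalizing $\widetilde f_j=f_j/\|f_j\|_s$ produces a nonzero element of $(\ker P)_{H^s}\cap(\ker P)_{H^s}^\perp$, a contradiction. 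Your route is shorter and avoids the case split, at the cost of importing the equivalence already established; the paper's is self-contained but essentially reproves half of that equivalence.

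One small wrinkle in your bootstrap: the orthogonal complement $(\ker P)^\perp$ depends on which Sobolev inner product you use, so a single decomposition $h=h_0+h_1$ with $h_1\perp(\ker P)_{H^s}$ does not automatically yield $h_1\perp(\ker P)_{H^t}$ for every $t$, which is what applying \eqref{alpha} at level $t$ requires. Either re-decompose $h$ at each inductive step (the kernel component is always smooth, so this is harmless), or --- as the paper does --- bootstrap with \eqref{beta} itself, which needs no orthogonal splitting: from $h\in H^s(M)$ and $Ph=g\in C^\infty(M)$, the inequality $\|h\|_{s+m}\le K(\|h\|_s+\|g\|_s)$ (extended from smooth functions by density) gives $h\in H^{s+m}(M)$, and one iterates.
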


\begin{proof} 
	By using the same arguments from the proof of Proposition \ref{eqab}, we have $\ker P \subset C^{\infty}(M)$. So, let us show that $P(C^{\infty}(M))$ is closed in $C^{\infty}(M)$ with the ${\mathscr{D}'}(M)$ relative topology.
	
	Let $f_j, g \in C^{\infty}(M)$ such that $Pf_j \rightarrow g$ in $H^s(M)$, then we can assume that $f_j \perp (\ker P)_{H^s}$, for all $j\in\mathbb{N}$. Indeed, for each $f_j \in C^{\infty}(M)$ we can write $f_j=f_{1j}+f_{2j}$, with $f_{1j} \in (\ker P)_{H^s}$ and $f_{2j} \perp (\ker P)_{H^s}$. Since $\ker P \subset C^{\infty}(M)$ and $f_j \in C^{\infty}(M)$, we have $f_{2j} \in C^{\infty}(M)$ and $Pf_{2j} = Pf_j \rightarrow g$.
	
	Let us treat the cases when $\{\|f_j\|_s\}$ is bounded and when $\{\|f_j\|_s\}$ is unbounded separately. 
	
	First assume that $\{\|f_j\|_s\}$ is bounded. Since $\{Pf_j\}$ is convergent in $H^s(M)$, the sequence $\{\|Pf_j\|_s\}$ is bounded and, by \eqref{beta}, we have that $\{\|f_j\|_{s+m}\}$ is bounded. Thus, by the Rellich-Kondrachov Lemma, the sequence $\{f_j\}$ has a convergent subsequence in $H^s(M)$, which we continue to denote $\{f_j\}$. Let $h \in H^s(M)$ such that $f_j \rightarrow h$ in $H^s(M)$, by continuity of $P$, we have $Pf_j \rightarrow Ph$ in $H^{s-d}(M)$. Since $Pf_j \rightarrow g$ in $H^s(M)$ and $d\geq 0$, then $s-d<s$ and we have $Ph=g$. 
	
	Finally, by \eqref{beta}, $\|h\|_{s+m} \leq K(\|h\|_s+\|Ph\|_s)= K(\|h\|_s+\|g\|_s).$ Thus,   
	$h \in H^{s+m}(M)$. By induction we have $h \in \bigcap_s H^s(M) = C^{\infty}(M)$.
	
	Now, assume that $\{\|f_j\|_s\}$ is unbounded. Then it is possible to obtain a subsequence, which we continue to denote $\{f_j\}$, such that $\|f_j\|_s \rightarrow \infty$. 
	
	Since $\{\|Pf_j\|_s\}$ is bounded, because $Pf_j \rightarrow g$ in $H^s(M)$, setting $\widetilde{f}_j = f_j/\|f_j\|_s$ we have 
	$$\|P\widetilde{f}_j\|_s=\dfrac{\|Pf_j\|_s}{\|f_j\|_s} \longrightarrow 0,$$
	
	By \ref{beta}, $\|\widetilde{f}_j\|_{s+m} \leq K(\|\widetilde{f}_j\|_s+\|P\widetilde{f}_j\|_s),$ which implies that $\{\|\widetilde{f}_j\|_{s+m}\}$ is bounded. Now, by the Rellich-Kondrachov Lemma, this sequence has a convergent subsequence in $H^s(M)$, which we continue to denote by $\{\widetilde{f}_j\}$. Thus, $\widetilde{f}_j \rightarrow t \in H^s(M)$ and $Pt=0$, hence $t \in \ker P$. 
	
	However, $\widetilde{f}_j \perp \ker P$ in $H^s(M)$, thus $t \perp \ker P$ and $t=0$. Moreover
	$$\|t\|_s = \lim_{j \rightarrow \infty}{\|\widetilde{f}_j\|_s}=1,$$
	which contradicts the statement of $t=0$. Then $\{\|f_j\|_s\}$ must be bounded, once we take it as perpendicular to $\ker P$.
\end{proof}

\begin{theorem}
	\label{gammaGH}
	Any strongly invariant operator $P$, defined on $M$, satisfying condition \eqref{gamma} is globally hypoelliptic.
\end{theorem}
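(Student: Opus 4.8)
The plan is to show the contrapositive is awkward here, so instead I would argue directly: assume $P$ satisfies \eqref{gamma} and take $u \in {\mathscr{D}'}(M)$ with $Pu = f \in C^{\infty}(M)$; I must produce $v \in C^{\infty}(M)$ with $u - v \in \ker P$, which by the first half of \eqref{gamma} forces $u \in C^{\infty}(M)$. The natural candidate for $v$ comes from the closedness of $P(C^{\infty}(M))$: since $f \in C^{\infty}(M)$, I want to realize $f$ as $Ph$ for some $h \in C^{\infty}(M)$. The first step, then, is to approximate. Writing $u = \sum_j \langle \widehat{u}(j), \overline{e_j}\rangle$, set $u_r$ to be the partial sum up to $j = r$; these are smooth (finite sums of eigenfunctions), $u_r \to u$ in ${\mathscr{D}'}(M)$, and by \eqref{rel_distr} together with strong invariance, $P u_r$ is the $r$-th partial sum of $Pu = f$. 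Hence $P u_r \to f$ in ${\mathscr{D}'}(M)$; moreover since $f \in C^{\infty}(M)$ its partial sums converge to $f$ in every $H^s(M)$, so in particular $P u_r \to f$ in $H^s(M)$ for every $s$.

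Now apply the closedness hypothesis \eqref{gamma}: with $f_r := u_r \in C^{\infty}(M)$ and $P f_r \to f$ in $H^s(M)$ for some (indeed every) $s$, condition \eqref{gamma} yields $h \in C^{\infty}(M)$ with $Ph = f = Pu$. Therefore $P(u - h) = 0$, i.e. $u - h \in \ker P$. By the first clause of \eqref{gamma}, $\ker P \subset C^{\infty}(M)$, so $u - h \in C^{\infty}(M)$, and since $h \in C^{\infty}(M)$ we conclude $u = h + (u-h) \in C^{\infty}(M)$. This establishes global hypoellipticity.

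The main point requiring care — and what I expect to be the only real obstacle — is the justification that $P u_r$ equals the partial sum of $f$ and hence converges to $f$ in $H^s(M)$ rather than merely in ${\mathscr{D}'}(M)$: one must check that strong invariance genuinely gives $\widehat{P u_r}(\ell) = \sigma_P(\ell)\widehat{u_r}(\ell)$, which vanishes for $\ell > r$, so that $P u_r = \sum_{\ell \le r} \langle \sigma_P(\ell)\widehat{u}(\ell), \overline{e_\ell}\rangle = \sum_{\ell \le r}\langle \widehat{f}(\ell), \overline{e_\ell}\rangle$; this uses \eqref{rel_distr} applied to $u_r \in C^\infty(M) \subset {\mathscr{D}'}(M)$ and the fact that truncation commutes with an $E$-invariant operator. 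Once this identification is in hand, the convergence $Pu_r \to f$ in every $H^s(M)$ is exactly the smoothness characterization \eqref{f-smooth} applied to the tails $\sum_{\ell > r}$, and the rest is a direct invocation of \eqref{gamma}. I would also remark that the hypothesis $d \geq 0$ and $m > 0$ from the preceding propositions are not needed here — \eqref{gamma} alone suffices — which is worth a one-line comment for the reader.
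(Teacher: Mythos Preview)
Your argument is correct and follows the same three-step outline as the paper: approximate the distribution by smooth functions, invoke the closedness in \eqref{gamma} to produce $h\in C^\infty(M)$ with $Ph=f$, then use $\ker P\subset C^\infty(M)$ to conclude. The only difference is in how the approximating sequence is chosen. The paper picks $f\in H^s(M)$ for some $s$, takes an arbitrary sequence $f_j\in C^\infty(M)$ with $f_j\to f$ in $H^s(M)$ by density, and then uses the continuity $P:H^s(M)\to H^{s-d}(M)$ to get $Pf_j\to g$ in $H^{s-d}(M)$; this tacitly relies on $P$ having a finite order $d$. Your choice of the Fourier partial sums $u_r$ is more explicit and exploits strong invariance directly: since $\widehat{Pu_r}(\ell)=\sigma_P(\ell)\widehat{u_r}(\ell)$ vanishes for $\ell>r$ and equals $\widehat{f}(\ell)$ for $\ell\le r$, the sequence $Pu_r$ is exactly the partial Fourier series of the smooth function $f$, hence converges to $f$ in every $H^s(M)$. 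This sidesteps any appeal to Sobolev continuity of $P$, so your remark that the order hypothesis from the preceding propositions is unnecessary here is well taken. Both routes are short; yours is marginally more self-contained.
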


\begin{proof}
	Let $P$ be a strongly invariant operator on $M$ and assume that $Pf=g \in C^{\infty}(M)$, with $f \in {\mathscr{D}'}(M)$. Since ${\mathscr{D}'}(M)=\bigcup_s H^s(M)$, then $f \in H^s(M)$, for some $s\in\mathbb{R}$. By density, we obtain a sequence $\{f_j\}_j$ in $C^{\infty}(M)$ such that $f_j \rightarrow f$ in $H^s(M)$, and therefore $Pf_j \rightarrow Pf=g$ in $H^{s-d}(M)$. Thus, by \eqref{gamma}, there is $h \in C^{\infty}(M)$ such that $Ph=g$ and $P(f-h)=Pf-Ph=g-g=0,$ that is, $f-h \in \ker P$. 
	
	Since $\ker P \subset C^{\infty}(M)$, we have $f-h \in C^{\infty}(M)$. Thus, $f=(f-h)+h \in C^{\infty}(M)$ and $P$ is globally hypoelliptic.
\end{proof}

\section*{Acknowledgments}
This study was financed in part by the Coordenação de Aperfeiçoamento de Pessoal de Nível Superior - Brasil (CAPES) - Finance Code 001.

%\addcontentsline{toc}{section}{References}
\section*{References}
\biboptions{sort&compress}
\bibliographystyle{model5-names}
\bibliography{references}

\end{document}